\documentclass[12pt, reqno]{amsart}
\usepackage{fullpage}
\usepackage{graphicx}
\usepackage{amsmath, amssymb}
\usepackage{physics}
\usepackage{bbm,bm}
\usepackage{hyperref}
\usepackage{enumerate}
\usepackage{xcolor}

\newtheorem{theorem}{Theorem}
\newtheorem{lemma}{Lemma}

\theoremstyle{definition}
\newtheorem{example}{Example}
\newtheorem{subexample}{Example}[example]

\numberwithin{equation}{section}

\newcommand{\R}{\mathbb{R}}
\newcommand*\diff{\mathop{}\!\mathrm{d}}
\def\nl{\\ \noalign{\medskip}}
\def\sign{{\rm sign}}
\def\bfF{\mathbf{F}}
\def\calE{\mathcal{E}}
\def\ds{\partial_s}
\def\bfz{\mathbf{z}}
\def\bfu{\mathbf{u}}
\def\bfdz{\bm{\delta}\mathbf{z}}
\def\dphi{\delta\varphi}
\def\bfom{\bm{\omega}}
\def\da{\partial_\alpha}
\def\dt{\partial_t}

\definecolor{heavyred}{cmyk}{0,1,1,0.25}
\hypersetup{
pdftitle=Title,
pdfpagemode=UseOutlines,
citebordercolor=0 0 1,
colorlinks=true,
allcolors=heavyred,
breaklinks=true,
pdfauthor={Angeles, G.M., Barber, J., Schmeiser, C.},
pdfpagetransition=Dissolve,
}

\title{Pitchfork bifurcation and traveling waves \\for a planar ensemble of rigid filaments \\with repulsive interaction} 

\author{Gervy Marie Angeles$^*$}
\address{Institute of Mathematics, University of the Philippines Diliman, Quezon City 1101, Philippines}
\email{gmangeles1@up.edu.ph}

\author{Jared Barber}
\address{Department of Mathematical Sciences, Indiana University Indianapolis, Indianapolis, Indiana, USA}
\email{jarobarb@iu.edu}

\author{Christian Schmeiser}
\address{Faculty of Mathematics, University of Vienna, Oskar Morgenstern-Platz 1, 1090 Wien, Austria}
\email{Christian.Schmeiser@univie.ac.at}

\subjclass[2020]{Primary. 35B32, 37L10, 35C07; Secondary. 35Q92, 92C17}
\keywords{Pitchfork bifurcation, repulsive interaction, traveling wave solutions, energy minimization, actin filaments}
\date{\today}

\thanks{$^*$Corresponding author}
\thanks{Abbreviated title: Bifurcation and waves in repulsive filaments}

\begin{document}
\begin{abstract}
The so-called \emph{Filament Based Lamellipodium Model} is a complex modeling framework for a very heterogeneous chemo-mechanical system of cell biology.
It contains a model for Coulomb repulsion between filaments, whose effect on the stability of the system has been unclear. In this work, a strongly simplified version
of the model is considered, showing a destabilizing effect of the repulsion. This instability results in a pitchfork bifurcation with an additional rotational symmetry, leading to a two-dimensional bifurcating manifold of traveling wave solutions.
The simplified model is derived, its linearization around the trivial steady state is analyzed, and a formal bifurcation analysis is carried out. It is shown that the pitchfork 
bifurcation maybe super- or sub-critical. Time dependent numerical simulations illustrate these results and provide additional, more global information on the emergence of periodic and chaotic dynamics by secondary bifurcations.
\end{abstract}

\maketitle

\section{Introduction}
This work is concerned with a theoretical investigation of a strongly simplified version of the Filament Based Lamellipodium Model (FBLM) \cite{MOSS}. The FBLM is a 
two-dimensional anisotropic continuum model for the dynamics of the actin filament network in the lamellipodium of biological cells spread out on flat substrates.
The filaments (polymers of the protein actin) interact with each other, and with a large number of other intra- and extra-cellular components provide regulation and 
mechanical stability of the network. Consequentially, the full FBLM is very complex and not accessible to serious studies of its analytical properties. However, various versions 
have been used to simulate cellular behaviour \cite{OelzCSSmall08,OelzCS12,MOSS,HirschManhartCS17,SfakianakisPeurichardBrunkCS18}. Analytical results are only available for very much simplified submodels \cite{OelzCS10,ManhartCS17}.

A fundamental question is stability or the related question, in biological terms, of homeostasis. Also destabilizing mechanisms are acceptable, since the model is expected
to describe, among others, mechanisms like the transition from nonmoving to moving states \cite{HirschManhartCS17}. This study is concerned with a caricature of the FBLM,
which cannot be expected to have biological relevance, but which concentrates on a particular mechanism, which turns out to have both stabilizing and destabilizing effects.
We call it \emph{pressure}, and it results from Coulomb repulsion of neighboring actin filaments, which are known to typically carry a slight negative electric charge 
\cite{MOSS,ref48fromMOSS}. 

We consider an ensemble of filaments, described as rigid rods of equal lengths, placed side by side on a flat surface. They slide across the surface against the action
of a \emph{friction} force, in the biological context caused by dynamic adhesions connecting the cell with the substrate. The pressure is modeled by prescribing a potential
energy term depending on a transversal filament density, see the following section. To confine the ensemble against the effect of repulsion, we prescribe \emph{inward forces} on the 
two outer filaments. One effect of the pressure is to avoid concentrations of filaments. However our analysis will show that there is also a destabilizing effect in the longitudinal 
direction. This will be balanced by a \emph{tension} force between the centers of mass of the filaments. This force does not have any biological significance. In the full FBLM cell
membrane tension and myosin contraction have similar effects.

In the following section a mathematical model for the system described above is derived. It is based on a microscopic description for a finite number of filaments. Then a continuum
limit is carried out, where the number of filaments tends to infinity and the typical distance between neighboring filaments tends to zero. The resulting model is a system of PDEs
for the curve of centers of mass of the filaments and for the filament orientations. Both unknowns depend on time and on a Lagrangian variable, the continuum version of a filament 
counter. The system is nonlinear and seems to be parabolic for physically relevant solutions.
The model is rotationally symmetric and possesses a one-parameter family of trivial steady states, where the parameter determines the orientation. 

In Section \ref{sec:lin}, the linearization around the trivial steady state is analyzed. The essential part of the linearized problem is a cross-diffusion system, which is shown to be 
well posed. The null space of the linearized operator is generically one-dimensional due to the rotational symmetry. However, a steady state \emph{bifurcation} occurs for a critical 
value of a dimensionless parameter, measuring the combined strength of the inward forces and the tension relative to the strength of the pressure. For large enough values of this
parameter, the spectral stability of the set of trivial steady states is shown.

These results raise the expectation that the trivial steady states can be destabilized by the effect of the pressure. This is verified in Section \ref{sec:bif} by a formal bifurcation
analysis. Since the problem has an additional flip symmetry, a \emph{pitchfork bifurcation} occurs, which may be super- or sub-critical, depending on the value of an additional
dimensionless parameter. 
Crucially, due to the system's inherent rotational symmetry, we do not just have a standard pitchfork bifurcation; rather, the bifurcating manifold is two-dimensional.
As a consequence of this symmetry, the computations are rather involved and, therefore, aided by using {\tt Mathematica} \cite{Mathematica-reference}. The resulting normal form is a set of two ODEs. One of them governs the amplitude of the bifurcation, whereas the second unknown passively follows the dynamics of the first to account for the rotational invariance. The bifurcating solutions are stationary only in relation to the shape of the filament ensemble, which is deformed compared to the trivial steady state. Since in the deformed states the outer filaments are not parallel, the ensemble moves with constant velocity, pushed by the inward forces.

In the last section, the bifurcation results are illustrated by numerical simulations of the time dependent problem. A finite volume discretization is used for the spatial variable
and a standard ODE solver for the time discretization. Typical solutions for parameter values before and after the bifurcation are computed. In the case of the subcritical bifurcation, the simulation
shows convergence to a deformed state far from the trivial equilibrium. This suggests that the bifurcating branch, which is initially unstable, undergoes a secondary fold bifurcation.
Furthermore, beyond these traveling wave solutions, our numerical simulations reveal a much richer dynamical landscape, including the emergence of both periodic and chaotic dynamics within the network.

\section{Model derivation}

For $N\ge 1$ we consider $N+1$ filaments of length $L>0$ in the plane, parametrized at time $t$ by arclength $s$ (measured from the center of mass):
$$
\bfF_j(s,t) \in \R^2 \,,\qquad 0\le j\le N \,,\quad -\frac{L}{2} \le s \le \frac{L}{2}\,, \quad t\ge 0 \,.
$$
The dynamics of the filaments will be derived by a variational approach, where the interactions of filaments with the environment and with other filaments are described by
energy functionals. 

\subsection*{Friction.} Time discretization is needed to put friction in a variational framework. With a time step $\Delta t>0$ and a friction coefficient
$\mu^F>0$ we consider the quadratic {\it friction energy}
$$
\calE_{N,\Delta t}^F := \frac{\mu^F}{2\Delta t \,N} \sum_{j=0}^N \int_{-L/2}^{L/2} \left| \bfF_j(s,t) - \bfF_j(s,t-\Delta t)\right|^2 ds \,.
$$

\subsection*{Pressure by repulsion.} The filaments are assumed to be ordered in the sense that filament $j-1$ is supposed to lie left of filament $j$, when looking in the positive $s$-direction. This can be described
by the requirement 
$$
\left( \bfF_j - \bfF_{j-1}\right)^\bot \cdot \ds\bfF_j > 0 \,,\qquad j=1,\ldots,N \,,
$$
with $(a,b)^\bot := (-b,a)$. If we expect the filaments to cover a finite region as $N\to\infty$, a reasonable definition of a filament density $\rho_j(s,t)$ is given by
$$
\frac{1}{\rho_j} := N\left( \bfF_j - \bfF_{j-1}\right)^\bot \cdot \ds\bfF_j  \,,\qquad j=1,\ldots,N\,.
$$
The order (i.e. $\rho_j < \infty$) should be preserved in the dynamics by considering the {\it pressure energy}
$$
\calE_N^P := \frac{\mu^P}{N} \sum_{j=1}^N \int_{-L/2}^{L/2} \log (\rho_j L_\text{ref})\, ds \,,
$$
with $\mu^P>0$. This can be motivated physically by Coulomb repulsion, assuming the filaments to be electrically charged (see \cite{MOSS} for details). The reference length $L_\text{ref}$
is only introduced for dimensional consistency at this point. A reasonable choice for its value will be given below.

\subsection*{Boundary forces.} The repulsive forces generated by the pressure are counter-acted by forces exerted on the first and last filaments, which can be described variationally by the {\it boundary energy}
$$
\calE_N^B := \frac{1}{L}\int_{-L/2}^{L/2} (\bfF_0\cdot \mathbf{f}_0 - \bfF_N\cdot \mathbf{f}_1)ds \,,
$$
with the forces per length $\mathbf{f}_0(s,t)$ and $\mathbf{f}_1(s,t)$. 

\subsection*{Tension.} We shall see that the pressure also induces instability in the direction of the filaments. In the Filament Based Lamellipodium Model \cite{MOSS} this can be balanced, e.g., by including 
the effect of membrane tension. Here a stabilization is introduced by assuming an elastic connection between the centers of mass, described by a {\it tension energy} proportional to 
the total length of the polygon consisting of these points:
$$
\calE_N^T := \mu^T \sum_{j=1}^N |\bfF_j(0,t) - \bfF_{j-1}(0,t)| \,,\qquad \mu^T > 0 \,.
$$

\subsection*{Continuum limit.} We pass to the continuum limit $N\to\infty$. The discrete variable $j\in \{0,\ldots,N\}$ is replaced by $\alpha\in [0,1]$ with the correspondence 
$$
\bfF_j(s,t) \approx \bfF(\alpha_j,s,t)  \,,\qquad \alpha_j = \frac{j}{N} \,.
$$
The continuum limits of the four energy contributions are given by
\begin{eqnarray*}
\calE_{\Delta t}^F &=& \frac{\mu^F}{2\Delta t } \int_0^1 \int_{-L/2}^{L/2} \left| \bfF(\alpha,s,t) - \bfF(\alpha,s,t-\Delta t)\right|^2 ds\,d\alpha \,,\\
\calE^P &=& \mu^P \int_0^1 \int_{-L/2}^{L/2} \log (\rho(\alpha,s,t)L_\text{ref}) \, ds\,d\alpha \,,\qquad\mbox{with }    \frac{1}{\rho} = \da\bfF^\bot \cdot \ds\bfF\,,\\
\calE^B &=& \frac{1}{L}\int_{-L/2}^{L/2} \left(\bfF(0,s,t)\cdot \mathbf{f}_0(s,t) - \bfF(1,s,t)\cdot \mathbf{f}_1(s,t)\right)ds \,,\\
\calE^T &=& \mu^T \int_0^1 |\da\bfF(\alpha,0,t)| d\alpha   \,.
\end{eqnarray*}
A continuous time dynamics is obtained by minimizing the sum of the four energy contributions and then passing to the limit $\Delta t\to 0$. However, we add a final modeling step:

\subsection*{Rigid filaments.} We restrict to rigid filaments determined by their centers of mass $\bfz(\alpha,t)$ and direction angles $\varphi(\alpha,t)$:
$$
\bfF(\alpha,s,t) = \bfz(\alpha,t) + s\, \bfom(\varphi(\alpha,t)) \,,\qquad \bfom(\varphi) = (\cos\varphi,\sin\varphi) \,,
$$
 i.e., the minimization is carried out only with respect to $(\bfz,\varphi)$. For rigid filaments the energy contributions become
\begin{eqnarray*}
\calE_{\Delta t}^F &=& \frac{\mu^F L}{2\Delta t } \int_0^1 \left[ \left| \bfz(\alpha,t) - \bfz(\alpha,t-\Delta t)\right|^2 + \frac{L^2}{12} (\varphi(\alpha,t) - \varphi(\alpha,t-\Delta t))^2\right] d\alpha \,,\\
\calE^P &=& \mu^P \int_0^1 \int_{-L/2}^{L/2} \log (\rho(\alpha,s,t)L_\text{ref}) \, ds\,d\alpha \,,\qquad\mbox{with }    \frac{1}{\rho} = \da\bfz^\bot \cdot \bfom(\varphi) - s\,\da\varphi \,,\\
\calE^B &=& \bfz(0,t)\cdot \frac{1}{L}\int_{-L/2}^{L/2} \mathbf{f}_0(s,t) ds  - \bfz(1,t)\cdot \frac{1}{L}\int_{-L/2}^{L/2}\mathbf{f}_1(s,t) ds \,,\\
\calE^T &=& \mu^T \int_0^1 |\da\bfz(\alpha,t)| d\alpha   \,.
\end{eqnarray*}

\subsection*{Nondimensionalization.} A scaling is introduced by
$$
s\to Ls \,,\quad \bfz\to L_\text{ref}\ \bfz \,,\quad (t,\Delta t)\to \frac{\mu^F L_\text{ref}^2}{\mu^P}(t,\Delta t) \,,\quad  \rho\to \frac{\rho}{L_\text{ref}} \,,
$$ $$ \mathbf{f}_i \to f_\text{ref}\ \mathbf{f}_i \,,\quad \calE^X \to \mu^P L \calE^X \,,
$$
for $i=0,1$ and $X = F,P,B,T$, where $f_\text{ref}>0$ is a typical value of $|\mathbf{f}_i|$. It remains to choose a value for $L_\text{ref}$\,. It seems reasonable to balance the
strength of the pressure and the combined strengths of tension and boundary forces:
$$
L_\text{ref} := \frac{\mu^P L}{f_\text{ref} + \mu^T} \,.
$$
The scaled energy contributions are then given by
\begin{eqnarray*}
\calE_{\Delta t}^F &=& \frac{1}{2\Delta t } \int_0^1 \left[ \left| \bfz(\alpha,t) - \bfz(\alpha,t-\Delta t)\right|^2 + \beta^2 (\varphi(\alpha,t) - \varphi(\alpha,t-\Delta t))^2\right] d\alpha \,,\\
\calE^P &=& \int_0^1 \int_{-1/2}^{1/2} \log (\rho(\alpha,s,t)) \, ds\,d\alpha \,,\qquad\mbox{with }    \frac{1}{\rho} = \da\bfz^\bot \cdot \bfom(\varphi) - \sqrt{12}\,\beta s\,\da\varphi \,,\\
\calE^B &=& (1-\gamma) \left( \bfz(0,t)\cdot \overline{\mathbf f}_0(t)  - \bfz(1,t)\cdot \overline{\mathbf f}_1(t) \right)\,,\qquad \mbox{with } \overline{\mathbf f}_i(t) = \int_{-1/2}^{1/2}\mathbf{f}_i(s,t) ds \,,\\
\calE^T &=& \gamma \int_0^1 |\da\bfz(\alpha,t)| d\alpha   \,.
\end{eqnarray*}
with dimensionless parameters
$$
\beta = \frac{L}{\sqrt{12}\, L_\text{ref}} = \frac{f_\text{ref} + \mu^T}{\sqrt{12}\,\mu^P} \,,
\quad \text{and} \quad \gamma = \frac{\mu^T}{f_\text{ref} + \mu^T} \,,
$$
where $\beta>0$ measures the aspect ratio between filament length and expected width of the filament band, and $\gamma\in (0,1)$ measures the relative strength of tension against boundary forces.

\subsection*{The variations.} As mentioned above, the dynamics is determined by choosing $(\bfz(\alpha,t),\varphi(\alpha,t))$ as minimizer of the sum of the four energy contributions in the limit
$\Delta t\to 0$. Therefore we compute the variations of the energy contributions in the direction $(\bfdz(\alpha),\dphi(\alpha))$.

For the friction energy we obtain
\begin{align*} 
\delta\mathcal{E}_{\Delta t}^F = \int_{0}^{1} \left(\frac{\bfz(t) - \bfz(t-\Delta t)}{\Delta t} \cdot\bfdz 
+ \beta^2 \frac{\bfom(\varphi(t)) - \bfom(\varphi(t-\Delta t))}{\Delta t} \cdot\bfom(\varphi(t))^{\perp} \delta \varphi \right)d\alpha \,.
\end{align*}
Taking the limit as $\Delta t \to 0$, we have
\begin{align} \label{varF}
\lim_{\Delta t \to 0} \delta\mathcal{E}_{\Delta t}^F = \int_{0}^{1} \left(\dt\bfz \cdot\bfdz 
+ \beta^2 \dt\varphi \, \delta \varphi \right)d\alpha \,.
\end{align}	
The variation of the pressure energy is given by
$$ 
\delta \mathcal{E}^P = \int_0^1 \left( P_0 \bfom^\perp \cdot \da (\bfdz) - P_0\, \da \bfz \cdot \bfom \,\delta \varphi + \beta^2 P_1 \da (\delta \varphi) \right) d\alpha \,,
$$
where 
$$
P_0 := \int_{-1/2}^{1/2} \rho \, ds \,,\qquad  P_1 := \frac{\sqrt{12}}{\beta}\int_{-1/2}^{1/2} s\rho \, ds\,.
$$
Note that the scaling of $P_1$ is chosen such that it remains finite as $\beta\to 0$ (i.e., for relatively small filament length).
Integration by parts gives
\begin{align} 
\delta \mathcal{E}^P = &- \int_0^1 \left(  \da(P_0 \bfom^\perp)\cdot\bfdz + (P_0\, \da \bfz \cdot \bfom + \beta^2 \da P_1) \delta \varphi \right) d\alpha \nonumber\\
& + \left( P_0 \bfom^\perp\cdot\bfdz + \beta^2 P_1 \delta\varphi\right)\Big|_{\alpha=0}^1 \,.  \label{varP} 
\end{align}
The variation of the tension energy reads
\begin{align}\label{varT}
\delta \mathcal{E}^T = \gamma \int_{0}^{1} \frac{\da \bfz}{\abs{\da\bfz}} \cdot \da (\bfdz) d\alpha 
= -\gamma \int_{0}^{1} \da\left(\frac{\da \bfz}{\abs{\da\bfz}}\right) \cdot \bfdz \,d\alpha	+ \gamma \frac{\da \bfz}{\abs{\da\bfz}} \cdot \bfdz\Big|_{\alpha=0}^1 \,.
\end{align}
For the linear boundary energy, the variation is straightforward:	
\begin{align}\label{varB}
\delta \mathcal{E}^B = (1-\gamma) \left( \bfdz(0)\cdot \overline{\mathbf f}_0  - \bfdz(1)\cdot \overline{\mathbf f}_1 \right)
\end{align}
To act against the pressure, we prescribe the forces to be equally strong and to push inwards, i.e.,
\begin{align}\label{forces}
\overline{\mathbf f}_j(t) = \bfom(\varphi(j, t))^{\perp} \,, \quad \textrm{for } j = 0, 1 \,.
\end{align}
These forces depend on the deformation, and they are nonconservative, i.e., they cannot be derived by variation of a functional. 
Due to this choice, the final model will only partially have a gradient flow structure.

\subsection*{Euler-Lagrange equations.} Equating the sum of variations \eqref{varF}--\eqref{varB} (with forces \eqref{forces}) to zero for arbitrary $(\bfdz,\delta\varphi)$
leads to the Euler-Lagrange equations
\begin{eqnarray} \label{EL1}
\dt \bfz &=& \da\left(P_0 \,\bfom(\varphi)^\perp + \gamma \frac{\da\bfz}{|\da\bfz|}\right)  \,,\\
\beta^2 \dt\varphi &=& P_0 \,\da\bfz \cdot \bfom(\varphi) + \beta^2 \da P_1 \,, \label{EL2}
\end{eqnarray}
for $0<\alpha<1$, and the boundary conditions
\begin{align} \label{BC}
(P_0 - 1 + \gamma) \bfom(\varphi)^\perp + \gamma \frac{\da\bfz}{|\da\bfz|}	 = \da\varphi = 0 \,,\qquad\text{for } \alpha =0, 1 \,.
\end{align}
The second boundary condition is equivalent to $P_1=0$, which is obvious from the representations
\begin{eqnarray}
P_0 &=& \int_{-1/2}^{1/2} \frac{ds}{\da\bfz^\bot \cdot \bfom(\varphi) - \sqrt{12}\,\beta s\,\da\varphi} \,,\label{P0}\\
P_1 &=& \frac{12\,\da\varphi}{\da\bfz^\perp\cdot\bfom(\varphi)} \int_{-1/2}^{1/2} \frac{s^2 ds}{\da\bfz^\bot \cdot \bfom(\varphi) - \sqrt{12}\,\beta s\,\da\varphi} \,. \label{P1}
\end{eqnarray}
When taking the derivative of \eqref{EL1} with respect to $\alpha$, a closed system for $(\da\bfz,\varphi)$ is obtained. Therefore we introduce $\bfu := \da\bfz$ and rewrite 
\eqref{EL1}--\eqref{P1} as
\begin{eqnarray} \label{EL1a}
\dt \bfu &=& \da^2\left(P_0 \,\bfom(\varphi)^\perp + \gamma \frac{\bfu}{|\bfu|}\right)  \,,\\
\beta^2 \dt\varphi &=& P_0 \,\bfu \cdot \bfom(\varphi) + \beta^2 \da P_1 \,, \label{EL2a}
\end{eqnarray}
with the boundary conditions
\begin{align} \label{BCa}
(P_0 - 1 + \gamma) \bfom(\varphi)^\perp + \gamma \frac{\bfu}{|\bfu|}	 = \da\varphi = 0 \,,\qquad\text{for } \alpha =0, 1 \,,
\end{align}
and with 
\begin{eqnarray}
P_0 &=& \int_{-1/2}^{1/2} \frac{ds}{\bfu^\bot \cdot \bfom(\varphi) - \sqrt{12}\,\beta s\,\da\varphi} \,,\label{P0a}\\
P_1 &=& \frac{12\,\da\varphi}{\bfu^\perp\cdot\bfom(\varphi)} \int_{-1/2}^{1/2} \frac{s^2 ds}{\bfu^\bot \cdot \bfom(\varphi) - \sqrt{12}\,\beta s\,\da\varphi} \,. \label{P1a}
\end{eqnarray}

\subsection*{Formal properties.} The right hand side of \eqref{EL1}, \eqref{EL2} is a quasilinear second order operator. The leading order terms can be written in the form
$$
\begin{pmatrix}
k\, \bfom^\perp \otimes \bfom^\perp + \dfrac{\gamma}{|\bfu|^3}\bfu^\perp \otimes \bfu^\perp & \dfrac{\partial P_0}{\partial(\da\varphi)}\bfom^\perp \nl 
\nabla_{\bfu} P_1 & \dfrac{\partial P_1}{\partial(\da\varphi)}    
\end{pmatrix}    
\da^2 \begin{pmatrix} \bfz \\ \varphi \end{pmatrix} \,.
$$
Straightforward computations show that $k >0$ and $\partial P_1/\partial(\da\varphi)>0$. This implies that for $\bfu^\perp\cdot\bfom = \dfrac{1}{\rho(s=0)}> 0$ 
(which is needed for well-posedness anyway) the block diagonal part of the coefficient matrix is positive definite, an indication that the system is parabolic. To make this a more rigorous 
argument the off-diagonal cross-diffusion terms would have to be checked. 
Concerning questions of well-posedness, the form \eqref{EL1a}--\eqref{EL2a} is less useful, since \eqref{EL1a} contains the third order derivative of $\varphi$ with respect to $\alpha$.
However, for the formal computations in the following sections the formulation \eqref{EL1a}--\eqref{P1a} will be convenient.

Integration of \eqref{EL1} with respect to $\alpha$ and using the boundary conditions \eqref{BC} gives
$$
\frac{d}{dt} \int_0^1 \bfz\,d\alpha = (1-\gamma)\bfom(\varphi)^\perp \Big|_{\alpha=0}^1  \,.
$$
This shows that the center of mass of the whole structure moves as long as the outermost filaments are not parallel. When the formulation \eqref{EL1a}--\eqref{P1a} is used,
this equation allows to recover the positions of the filaments, since $\bfz$ can be computed from its average and from $\bfu = \partial_\alpha\bfz$.

The system \eqref{EL1a}--\eqref{P1a} has a family of (trivial) steady states given by
\begin{equation}\label{triv-state}
\varphi = \varphi_0 = \rm{const} \,,\qquad \bfu = -\bfom(\varphi_0)^\perp \,,
\end{equation}
with $P_0=1$, $P_1=0$, and with arbitrary $\varphi_0\in \R$. In this state the filaments cover a rectangle, which does not move. The freedom in choosing $\varphi_0$ is a 
consequence of the rotational symmetry of the problem, which also has a translational symmetry, whence the rectangle can be put in an arbitrary position: 
$z(\alpha) = z_0 -\alpha \bfom(\varphi_0)^\perp$ with arbitrary $z_0\in\R^2$.

\section{Linearization around a trivial steady state}\label{sec:lin}
Without loss of generality, we choose the trivial steady state 
\begin{equation}\label{trivial}
\overline{\bfu} =  \begin{pmatrix}1 \\ 0\end{pmatrix} \,,\qquad \overline\varphi = \frac{\pi}{2} \,,
\end{equation}
of \eqref{EL1a}--\eqref{P1a}. With the ansatz
\begin{align*}
\bfu = \overline{\bfu} + \begin{pmatrix} a \\ b \end{pmatrix} \,, \qquad \varphi = \overline{\varphi} + \psi \,,
\end{align*}
linearization of \eqref{EL1a}--\eqref{P1a} produces two decoupled problems for $a$ and for $(b,\psi)$.
The component $a$ solves the homogeneous Dirichlet problem for the heat equation,
\begin{align} \label{eq:bif1a}
\dt a = \da^2 a \,, \qquad
a(0,t) = a(1,t) = 0 \,,
\end{align}
and thus decays to zero exponentially. For the pair $(b,\psi)$ we obtain the cross diffusion problem
\begin{eqnarray}
\dt b &=& \da^2 (\gamma b - \psi) \,, \label{eq:bif1} \nl
\beta^2 \dt \psi &=& \beta^2 \da^2 \psi + b - \psi \,, \label{eq:bif1p}
\end{eqnarray}
with boundary conditions
\begin{align}
b - \psi = \da \psi = 0\,, \qquad \text{for } \alpha = 0, 1 \,. \label{eq:bif1bdy}
\end{align}
Rotational invariance manifests itself by the invariance with respect to $(b,\psi) \to (b+c,\psi+c)$, $c\in\R$, and the consequential family of trivial steady states 
$(b,\psi)= (\varphi_0-\overline{\varphi})(1,1)$, $\varphi_0\in\R$.

\subsection*{Nontrivial steady states.} With $\mathbf{X}(\alpha) := (b, \psi)$, we write the steady state equations of \eqref{eq:bif1}--\eqref{eq:bif1p} in the form
\begin{align} \label{eq:lin-ss}
\da^2\mathbf{X}(\alpha) = M \mathbf{X}(\alpha) \,, 
\qquad\text{with } M := \frac{1}{\beta^{2}}\begin{pmatrix}
-1/\gamma & 1/\gamma \\ -1 & 1
\end{pmatrix} \,.
\end{align}
The eigenvalues of $M$ are 
\begin{align*}
\lambda_0 = 0 \quad \textrm{and} \quad \lambda_1 = - \frac{1-\gamma}{\beta^2\gamma} \,, 
\end{align*}
with eigenvectors $(1, 1)$ and $(1, \gamma)$, respectively. 
Note that $\lambda_{1} < 0$ since $0 < \gamma < 1$. The general solution of \eqref{eq:lin-ss} is given by
\begin{align*}
\mathbf{X}(\alpha) = (A\sin(\kappa \alpha) + B \cos(\kappa \alpha)) \begin{pmatrix} 1 \\ \gamma \end{pmatrix}
+ (C\alpha + D) \begin{pmatrix} 1 \\ 1 \end{pmatrix} , 
\quad\text{with } \kappa := \sqrt{-\lambda_1} = \frac{1}{\beta} \sqrt{\frac{1-\gamma}{\gamma}} \,,
\end{align*}
and with arbitrary $A,B,C,D\in\R$. Inserting this into the boundary conditions \eqref{eq:bif1bdy} implies that a nontrivial steady state exists if and only if $\sin\kappa=0$ and $\cos\kappa=1$. For reasons explained below, we concentrate only on the smallest choice $\kappa_{0}=2\pi$. The set of nontrivial solutions is given by
\begin{align} \label{eq:lin-X}
b(\alpha) = A(\sin(2\pi \alpha) - 2\pi \gamma \alpha) + D \,,\quad 
\psi(\alpha) = A\gamma(\sin(2\pi \alpha) - 2\pi \alpha) + D \,,\quad A, D \in\R\,.
\end{align}
The constant $D$ corresponds to the trivial steady-state contribution.

Considering $\beta$ as bifurcation parameter, the largest bifurcation point is given by
\begin{align} \label{eq:lin-beta0}
\beta_{0} := \frac{1}{2\pi} \sqrt{\frac{1-\gamma}{\gamma}}\ .
\end{align}

\subsection*{Well-posedness and stability.}
We consider \eqref{eq:bif1a}, \eqref{eq:bif1}--\eqref{eq:bif1bdy} subject to initial conditions
\begin{equation}\label{eq:bif1-IC}
a(\alpha,0) = a_I(\alpha) \,,\qquad b(\alpha,0) = b_I(\alpha) \,,\qquad \psi(\alpha,0) = \psi_I(\alpha) \,.
\end{equation}
In the rest of this section, we abbreviate $\|\cdot\| := \|\cdot\|_{L^2(0,1)}$.

\begin{lemma} \label{lemma1}
Let $\beta>0$, $0<\gamma<1$, and $a_I,b_I,\psi_I\in L^2(0,1)$. Then \eqref{eq:bif1a}, \eqref{eq:bif1}--\eqref{eq:bif1bdy}, \eqref{eq:bif1-IC} has a unique solution $(a,b,\psi)\in C([0,\infty), L^2(0,1))^3$
satisfying
\begin{align}
&\|a(\cdot,t)\| \le e^{-\pi^2 t} \|a_I\| \,,\nonumber\\ 
&\|b(\cdot,t)\| + \|\psi(\cdot,t)\| \le c(\gamma)\exp\left( \frac{\lambda_1(\gamma)}{\beta^2}t\right) \left( \|b_I\| + \|\psi_I\| \right) \,,\label{lin-est}
\end{align}
for $t\ge 0$, with $c(\gamma), \lambda_1(\gamma)>0$.
\end{lemma}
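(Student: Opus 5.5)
The plan is to handle the decoupled component $a$ by the classical theory of the heat equation. For $(b,\psi)$ I would change unknowns so that the boundary conditions separate, and then combine a weighted $L^2$ energy estimate of Gårding type with a standard Lions / Galerkin existence argument. Note that, by rotational invariance, the constant pairs $(c,c)$ are steady states. Hence \eqref{lin-est} cannot express decay. I read it as a growth bound with some rate proportional to $1/\beta^2$, which is exactly what an energy estimate produces.

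\emph{Step 1 (the component $a$).} Problem \eqref{eq:bif1a} is the Dirichlet heat equation. Expanding $a_I$ in the sine basis $\sqrt2\sin(k\pi\alpha)$ gives the unique solution in $C([0,\infty),L^2)$. Alternatively, testing with $a$ and using the Poincaré inequality $\|\da a\|^2\ge \pi^2\|a\|^2$ gives $\frac{d}{dt}\|a\|^2\le -2\pi^2\|a\|^2$, and hence the first estimate.

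\emph{Step 2 (change of variables).} Set $w:=b-\psi$, so that \eqref{eq:bif1bdy} becomes $w=0$ and $\da\psi=0$ at $\alpha=0,1$: Dirichlet for $w$ and Neumann for $\psi$. From \eqref{eq:bif1}--\eqref{eq:bif1p} one computes
\begin{align*}
\dt w &= \gamma\da^2 w + (\gamma-2)\da^2\psi - \beta^{-2} w\,,\\
\dt\psi &= \da^2\psi + \beta^{-2} w\,.
\end{align*}
This system has an upper triangular principal part with positive diagonal entries $\gamma$ and $1$. I would work in the space $L^2\times L^2$, with form domain $H^1_0\times H^1$.

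\emph{Step 3 (weighted energy).} Let $E:=\tfrac12\|w\|^2+\tfrac K2\|\psi\|^2$ with $K>0$ to be chosen. Integrating by parts with the boundary conditions, all boundary terms vanish: $\int w\,\da^2\psi=-\int\da w\,\da\psi$ since $w=0$ at the boundary, and $\int\psi\,\da^2\psi=-\|\da\psi\|^2$ since $\da\psi=0$ there. This gives
$$
\frac{dE}{dt} = -\gamma\|\da w\|^2 +(2-\gamma)\int \da w\,\da\psi - K\|\da\psi\|^2 - \beta^{-2}\|w\|^2 + K\beta^{-2}\int w\psi\,.
$$
By Young's inequality the cross term is bounded by $\tfrac\gamma2\|\da w\|^2+\tfrac{(2-\gamma)^2}{2\gamma}\|\da\psi\|^2$. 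Choosing $K:=(2-\gamma)^2/(2\gamma)$ makes the gradient terms nonpositive. The last term is at most $\tfrac{K}{2\beta^2}(\|w\|^2+\|\psi\|^2)$, so $\frac{dE}{dt}\le \frac{\lambda_1(\gamma)}{\beta^2}E$ with $\lambda_1(\gamma)$ depending only on $\gamma$ (through $K$). Gronwall's inequality, together with the equivalence of $E^{1/2}$ to $\|b\|+\|\psi\|$ (constants depending on $K$, hence on $\gamma$), yields \eqref{lin-est} with some $c(\gamma)$. Uniqueness follows from the same estimate applied to the difference of two solutions.

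\emph{Step 4 (existence).} This is the step I expect to need the most care, because of the cross-diffusion and the mixed boundary conditions. The bilinear form associated with the right-hand side on $V=H^1_0\times H^1$, taken with respect to the weighted inner product $(w,\tilde w)+K(\psi,\tilde\psi)$, is continuous. The computation of Step 3 shows that it satisfies a Gårding inequality: coercive on $V$ up to a multiple of the $L^2$ norm. Lions' theorem (or a Galerkin scheme, using $\sin(k\pi\alpha)$ for $w$ and $\cos(k\pi\alpha)$ for $\psi$) then gives a weak solution in $L^2(0,T;V)\cap C([0,T];L^2)$ for $L^2$ initial data. Equivalently, the operator generates an analytic semigroup on $L^2$. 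Finally, transforming back via $b=w+\psi$ gives the asserted solution $(a,b,\psi)$.
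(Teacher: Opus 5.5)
Your proposal is correct and follows essentially the same route as the paper: a weighted $L^2$ energy in the variables $b-\psi$ and $\psi$ (the paper uses $\frac12\int(\gamma(b-\psi)^2+\psi^2)\,d\alpha$, which corresponds to your $K=1/\gamma$), positive definiteness of the gradient quadratic form, Gr\"onwall, and a standard existence argument (the paper invokes Hille--Yosida where you use Lions' theorem). One small fix: with your Young split the $\|\da\psi\|^2$ dissipation cancels exactly, so for the G{\aa}rding inequality in Step 4 argue instead that $\gamma x^2-(2-\gamma)xy+Ky^2$ is positive definite whenever $4\gamma K>(2-\gamma)^2$, which your choice of $K$ satisfies.
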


\begin{proof}
The result for $a$ is classical. For the analysis of \eqref{eq:bif1}--\eqref{eq:bif1bdy} we introduce the energy functional
$$
E := \frac{1}{2} \int_0^1 (\gamma(b-\psi)^2 + \psi^2)d\alpha \,,
$$
which is equivalent to $(\|b\| + \|\psi\|)^2$. Along solutions of \eqref{eq:bif1}--\eqref{eq:bif1bdy}, we compute
\begin{eqnarray}
\frac{dE}{dt}  &=& -\int_0^1 \left(\gamma^2 (\da b)^2 + (2 \gamma+1)(\da \psi)^2 - \gamma(2+\gamma)\da b\, \da \psi\right)d\alpha \label{dEdt}\\
&& +\ \frac{1}{\beta^2} \int_0^1 (b-\psi)\left( \psi - \gamma (b-\psi)\right) d\alpha \,.\nonumber
\end{eqnarray}
It is easily seen that the integrand in the first line is positive definite in $(\da b,\da\psi)$, implying
$$
\frac{dE}{dt} \le \frac{2\lambda_1}{\beta^2} E \,,
$$
with an appropriate (large enough) $\lambda_1(\gamma)>0$. This a priori estimate is the essential information to prove that the right hand side of \eqref{eq:bif1} generates a strongly 
continuous semigroup, e.g., by the Hille-Yoshida theorem (we skip the details, which are standard). This proves the existence and uniqueness statement of the theorem.
The inequality \eqref{lin-est} is a consequence of the Gr\"{o}nwall inequality with the constant $c(\gamma)$ resulting from the norm equivalence mentioned above.
\end{proof}

The estimate in the proof is very rough, as the nonnegative term in \eqref{dEdt} has been wasted. Actually, for $\beta>\beta_0$ (before the first bifurcation) we expect the set of trivial
steady states to be stable. We shall prove a result in this direction, assuming $\beta> \beta_0(1+\delta)$ (i.e., $\kappa< \frac{2\pi}{1+\delta}$), $\delta > 0$, in the following. 
In this case, the nullspace of 
\[
\mathcal L(b,\psi) = \left( \da^2(\gamma b - \psi),\ \da^2\psi + \frac{b-\psi}{\beta^2}\right)\,, 
\]
where $D(\mathcal L) = \{(b,\psi): b-\psi=\da\psi=0 \text{ for } \alpha=0,1\}$\,,
is one-dimensional and spanned by $(1,1)$. The adjoint operator $\mathcal L^*$ with respect to the scalar product in $L^2(0,1)^2$ is given by
\[
\mathcal L^*(u,v) = \left( \gamma\da^2 u + \frac{v}{\beta^2}, \da^2(v-u) - \frac{v}{\beta^2}\right)\,,
\]
where $D(\mathcal L^*) = \{(u,v): u=\da((1-\gamma)u - v)=0 \text{ for } \alpha=0,1\}$\,.
It also has a one-dimensional nullspace spanned by
\begin{equation}\label{uv}
(u_\kappa,v_\kappa) := \kappa\left( \sin(\kappa\alpha) - \frac{1-\cos(\kappa\alpha)}{1-\cos\kappa}\, \sin\kappa \,, 
(1-\gamma)\left( \sin(\kappa\alpha) + \frac{\cos(\kappa\alpha)}{1-\cos\kappa}\, \sin\kappa \right)\right) \,.
\end{equation}
Note that the factor $\kappa$ removes the singularity of $v_\kappa$ at $\kappa=0$, and by the assumption $\kappa < \frac{2\pi}{1+\delta}$ we stay away from 
the singularity at $\kappa=2\pi$. Consequentially,
\begin{equation}\label{uv-bound}
|u_\kappa(\alpha)|\,, |v_\kappa(\alpha)| \le c_1(\delta)\,, \quad 0\le \alpha\le 1 \,.
\end{equation}
For solutions $(b,\psi)$ of \eqref{eq:bif1} this implies the conservation law
\begin{equation}\label{cons-law}
\frac{d}{dt} \int_0^1 (bu_\kappa+\psi v_\kappa)d\alpha  = 0 \,.
\end{equation}
Therefore, we expect convergence as $t\to\infty$ to the trivial steady state $(b_\infty,\psi_\infty) = c(1,1)$ with the constant $c$ determined from
$$
c \int_0^1 (u_\kappa+ v_\kappa)d\alpha =  \int_0^1 (b_I u_\kappa+\psi_I v_\kappa) d\alpha \,.    
$$

\begin{lemma}\label{lem:coeff}
Let $0<\kappa<2\pi$ and $(u_\kappa,v_\kappa)$ be given by \eqref{uv}. Then 
$$
\int_0^1 (u_\kappa + v_\kappa)d\alpha > 2(1-\gamma) >0 \,.
$$
\end{lemma}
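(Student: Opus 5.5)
The plan is to evaluate both integrals in closed form; they turn out to be elementary. First abbreviate $q := \sin\kappa/(1-\cos\kappa) = \cot(\kappa/2)$. This is well defined for $0<\kappa<2\pi$, since $1-\cos\kappa>0$ there. In this notation
$$
u_\kappa = \kappa\bigl(\sin(\kappa\alpha) - q(1-\cos(\kappa\alpha))\bigr)\,,\qquad
v_\kappa = \kappa(1-\gamma)\bigl(\sin(\kappa\alpha) + q\cos(\kappa\alpha)\bigr)\,.
$$
We then use the three elementary integrals
$$
\int_0^1\kappa\sin(\kappa\alpha)\,d\alpha = 1-\cos\kappa\,,\qquad \int_0^1\kappa\cos(\kappa\alpha)\,d\alpha=\sin\kappa\,,\qquad \int_0^1\kappa(1-\cos(\kappa\alpha))\,d\alpha = \kappa-\sin\kappa\,.
$$

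For $v_\kappa$ these give $\int_0^1 v_\kappa\,d\alpha = (1-\gamma)\bigl[(1-\cos\kappa)+q\sin\kappa\bigr]$. The key simplification is
$$
q\sin\kappa = \frac{\sin^2\kappa}{1-\cos\kappa} = 1+\cos\kappa\,,
$$
so the bracket equals $2$ and $\int_0^1 v_\kappa\,d\alpha = 2(1-\gamma)$ exactly.

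For $u_\kappa$, the same identity gives
$$
\int_0^1 u_\kappa\,d\alpha = (1-\cos\kappa) - q(\kappa-\sin\kappa) = (1-\cos\kappa)+(1+\cos\kappa) - \kappa q = 2 - \kappa\cot(\kappa/2)\,.
$$
So the claim reduces to showing $x\cot x<1$ for $x=\kappa/2\in(0,\pi)$. We split into two cases. If $x\in[\pi/2,\pi)$, then $\cot x\le 0$, so $x\cot x\le 0<1$. If $x\in(0,\pi/2)$, then the inequality is equivalent to $\tan x > x$, which is standard: $\tan 0 = 0$ and $(\tan x)' = 1+\tan^2 x>1$ on $(0,\pi/2)$.

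Hence $\int_0^1 u_\kappa\,d\alpha>0$, and adding the two contributions gives $\int_0^1(u_\kappa+v_\kappa)\,d\alpha>2(1-\gamma)>0$, using $\gamma<1$. The only step needing care is the identity $q\sin\kappa=1+\cos\kappa$, which collapses both integrals; after that the argument is the elementary bound $x\cot x<1$. I expect no real obstacle.
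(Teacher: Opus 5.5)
Your proof is correct and follows the paper's argument. Both compute the integral in closed form, $\int_0^1(u_\kappa+v_\kappa)\,d\alpha = 4-2\gamma-\kappa\cot(\kappa/2)$, and reduce the claim to the elementary inequality $2(1-\cos\kappa)-\kappa\sin\kappa>0$ (equivalently $x\cot x<1$ on $(0,\pi)$); you additionally show that $\int_0^1 v_\kappa\,d\alpha=2(1-\gamma)$ exactly and verify the inequality via $\tan x>x$, where the paper only says it is ``easily checked.''
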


\begin{proof}
We compute
$$
\int_0^1 (u_\kappa+ v_\kappa)d\alpha = 2\left(2-\gamma - \frac{\kappa\sin\kappa}{2(1-\cos\kappa)}\right) \,.
$$
It is easily checked that $2(1-\cos\kappa)-\kappa\sin\kappa>0$, implying the result.
\end{proof}
After the transformation $(b,\psi) \to (b+b_\infty,\psi+\psi_\infty)$ we may assume from now on
\begin{equation}\label{IC-cond}
\int_0^1 (b_I u_\kappa+\psi_I v_\kappa)d\alpha = 0 \,. 
\end{equation}

\begin{theorem} \label{thm:wp}
Let $0<\gamma<1$ and $\beta\ge \beta_0(1+\delta) = \dfrac{1}{2\pi}\sqrt{\dfrac{1-\gamma}{\gamma}}(1+\delta)$ with $\delta>0$\,. Assume $b_I,\psi_I\in L^2(0,1)$ satisfy \eqref{IC-cond}. Then the solution of \eqref{eq:bif1}--\eqref{eq:bif1bdy}, \eqref{eq:bif1-IC} satisfies
$$
\|b(\cdot,t)\| + \|\psi(\cdot,t)\| \le c(\gamma)\exp\left( \left(\frac{\lambda_1(\gamma)}{\beta^2} - \lambda_2(\gamma,\delta)\right)t\right) \left( \|b_I\| + \|\psi_I\| \right) \,, 
$$
with $\lambda_2(\gamma,\delta)>0$ and with $\lambda_1(\gamma), c(\gamma)$ as in Lemma \ref{lemma1}.
\end{theorem}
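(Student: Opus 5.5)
The plan is to reuse the energy identity \eqref{dEdt} from the proof of Lemma \ref{lemma1}. This time we do not throw away the dissipation term. Instead we use it, together with the constraint \eqref{IC-cond}, to get an extra decay rate $\lambda_2$.

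By the conservation law \eqref{cons-law}, the solution satisfies $\int_0^1(bu_\kappa+\psi v_\kappa)\,d\alpha=0$ for all $t\ge 0$. The strategy is therefore to prove a Poincaré-type inequality on the closed subspace
$$
V_\kappa:=\Big\{(b,\psi)\in H^1(0,1)^2:\ \int_0^1(bu_\kappa+\psi v_\kappa)\,d\alpha=0\Big\},
$$
of the form
$$
\int_0^1\big((\da b)^2+(\da\psi)^2\big)\,d\alpha\ \ge\ c_2(\gamma,\delta)\,\big(\|b\|^2+\|\psi\|^2\big).
$$
The boundary condition $b=\psi$ at $\alpha=0,1$ may also be used if convenient, since solutions satisfy it for $t>0$.

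The inequality would follow from the standard compactness argument. Suppose it fails. Then there is a normalized sequence whose gradient norms tend to zero. By Rellich it converges to a constant pair $(c_b,c_\psi)$ with $\|(c_b,c_\psi)\|=1$, which must satisfy $c_b\int u_\kappa+c_\psi\int v_\kappa=0$ and, via the boundary traces, $c_b=c_\psi$. Hence $c_b\int_0^1(u_\kappa+v_\kappa)\,d\alpha=0$, and Lemma \ref{lem:coeff} forces $c_b=0$, a contradiction.

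The constant has to be uniform in $\kappa\in(0,2\pi/(1+\delta)]$, i.e.\ in $\beta\ge\beta_0(1+\delta)$. For this I would run the same contradiction argument along sequences $\kappa_n$, using \eqref{uv-bound} and the fact that $(u_\kappa,v_\kappa)$ depends continuously on $\kappa$ on this compact range, with a limit at $\kappa\to0$ thanks to the factor $\kappa$. The lower bound $2(1-\gamma)$ in Lemma \ref{lem:coeff} is uniform in $\kappa$, which is exactly what closes this argument. Without the boundary condition one would instead need the $2\times2$ determinant $\int u_\kappa\cdot|\Omega|$-type nondegeneracy, so I would rely on the trace condition $b=\psi$.

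Next, in \eqref{dEdt} I would split off half of the positive definite first line. One half bounds $\frac{dE}{dt}$ exactly as in Lemma \ref{lemma1}, so that together with the $\beta^{-2}$ term it gives the rate $2\lambda_1/\beta^2$. This is possible because the positive definite quadratic form has minimal eigenvalue $m(\gamma)>0$, and halving it only changes constants that are absorbed into $\lambda_1(\gamma)$. If needed, I would redefine the split so that the constants match those of Lemma \ref{lemma1}. The other half is bounded below by $\tfrac{m(\gamma)}{2}c_2\,(\|b\|^2+\|\psi\|^2)\ge 2\lambda_2 E$, using the equivalence of $E$ with $(\|b\|+\|\psi\|)^2$. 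This yields
$$
\frac{dE}{dt}\le 2\Big(\frac{\lambda_1}{\beta^2}-\lambda_2\Big)E,
$$
and Grönwall plus norm equivalence gives the claimed estimate. For $L^2$ data I would argue with smooth data in the domain of $\mathcal L$ and pass to the limit by density, using the semigroup from Lemma \ref{lemma1}.

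The main obstacle is the uniform Poincaré inequality on $V_\kappa$. Specifically, it must be ensured that the only constants allowed by the boundary condition, namely multiples of $(1,1)$, are excluded by the orthogonality constraint uniformly in $\kappa$, and this is precisely where Lemma \ref{lem:coeff} enters.
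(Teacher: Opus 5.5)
Your argument is correct. Its skeleton is the paper's: keep the dissipation term in \eqref{dEdt}, prove a Poincar\'e-type inequality on the set defined by the time-invariant constraint \eqref{IC-cond} together with $b=\psi$ at $\alpha=0,1$, then close with Gr\"onwall. The difference is in how that inequality is proved.

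The paper does it by hand. It bounds the dissipation below by $c_2(\|\da(b-\psi)\|^2+\|\da\psi\|^2)$. It then applies the Dirichlet Poincar\'e inequality to $b-\psi\in H_0^1$ and the mean-zero one to $\psi-\overline\psi$, both with constant $\pi^2$. The only remaining mode, the constant $\overline\psi\,(1,1)$, is controlled directly from the constraint by Cauchy--Schwarz, using \eqref{uv-bound} and Lemma \ref{lem:coeff}. Uniformity in $\beta\ge\beta_0(1+\delta)$ is then automatic, because $c_1(\delta)$ and $2(1-\gamma)$ do not depend on $\kappa$, and $\lambda_2$ comes out explicitly.

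Your compactness argument isolates the same mechanism: constants compatible with the boundary condition are multiples of $(1,1)$, which Lemma \ref{lem:coeff} rules out. It is non-constructive, however, and needs the extra contradiction argument over $\kappa_n\to\kappa_*\in[0,2\pi/(1+\delta)]$. That step is sound, since $(u_\kappa,v_\kappa)$ is jointly continuous in $(\kappa,\alpha)$ up to $\kappa=0$. There it tends to $(0,2(1-\gamma))$, with $\int_0^1(u_0+v_0)\,d\alpha=2(1-\gamma)>0$. What you gain is independence from the specific splitting $b=(b-\psi)+\psi$; what you lose is any explicit value of $\lambda_2$.

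Two small corrections.

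First, $V_\kappa$ as you defined it does not carry the boundary condition, and on $V_\kappa$ alone the inequality is false: a single linear constraint cannot exclude a two-dimensional space of constant pairs. So $b=\psi$ at $\alpha=0,1$ must be built into the space, not used ``if convenient''. Your remark about a $2\times 2$ determinant alternative should be dropped.

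Second, halving the dissipation is unnecessary. In Lemma \ref{lemma1} the first line of \eqref{dEdt} is simply discarded as nonpositive. The rate $2\lambda_1/\beta^2$ comes from the zeroth-order term alone, e.g.\ $\int_0^1(b-\psi)(\psi-\gamma(b-\psi))\,d\alpha\le\|\psi\|^2/(4\gamma)\le E/(2\gamma)$. Hence the entire dissipation is available for $\lambda_2$, and $\lambda_1(\gamma)$ is literally the one from Lemma \ref{lemma1}.
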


\paragraph{\bf Remark.}
\begin{enumerate}[(i)]
\item The theorem shows that for $\beta$ large enough the set of trivial steady states is spectrally stable. Explicit formulas for $\lambda_1(\gamma)$ and 
$\lambda_2(\gamma,\delta)$ could be given. However they would be rather complicated and they would not be optimal, i.e., $\lambda_1/\lambda_2 > \beta_0^2$\,.
\item For general initial data the asymptotic state $(b_\infty,\psi_\infty)$ can be computed by the conservation law \eqref{cons-law}. We are not aware of a corresponding conservation law for the
nonlinear problem. We expect that for $\beta> \beta_0$ solutions of the nonlinear problem converge to a trivial steady state \eqref{triv-state}, however there is no explicit prediction of the 
angle $\varphi_0$ from the initial data.
\end{enumerate}
\begin{proof}[Proof of Theorem \ref{thm:wp}]
As already stated in the proof of Lemma \ref{lemma1}, the quadratic expression of $(\da b,\da\psi)$ in the first integral in \eqref{dEdt} is positive definite, since 
$4\gamma^2(2\gamma + 1)>\gamma^2(2+\gamma)^2$. Therefore the integral can be bounded from below by
$$
c_2(\gamma) \left(\|\partial_\alpha(b-\psi)\|^2 + \|\partial_\alpha \psi\|^2 \right)\,, \qquad \text{with } c_2 >0 \,.
$$
Now we use the Poincar\'e inequalities for functions in $H_0^1((0,1))$ and in $H^1((0,1))$\,:
$$
\|\partial_\alpha(b-\psi)\|^2 + \|\partial_\alpha \psi\|^2 \ge \pi^2 \left(\|b-\psi\|^2 + \|\psi - \overline\psi\|^2 \right) \,,
$$
with the average
$$
\overline\psi := \int_0^1 \psi\,d\alpha \,.
$$
For both Poincar\'e inequalities $\pi^2$ is the optimal constant. Since \eqref{IC-cond} holds for all times, we have
$$
\int_0^1 ((b-\psi)u_\kappa + (\psi-\overline\psi)(u_\kappa+v_\kappa))d\alpha + \overline\psi \int_0^1 (u_\kappa+v_\kappa)d\alpha = 0 \,.
$$
This implies by the Cauchy-Schwarz inequality, by Lemma \ref{lem:coeff}, and by \eqref{uv-bound} that
$$
|\overline\psi| \le \frac{c_1(\delta)}{2(1-\gamma)} \left(\|b-\psi\| + \|\psi - \overline\psi\| \right) \,,
$$
leading to
\begin{align*}
E &= \frac{\gamma}{2}\|b-\psi\|^2 + \frac{1}{2}\|\psi\|^2 = \frac{\gamma}{2}\|b-\psi\|^2 + \frac{1}{2}\|\psi-\overline\psi\|^2 + \frac{1}{2}{\overline\psi}^2 \nl
&\le c_3(\gamma,\delta)\left(\|b-\psi\|^2 + \|\psi - \overline\psi\|^2 \right)  \,.
\end{align*}
Combining our results with the proof of Lemma \ref{lemma1} completes the proof with $\lambda_2 = \dfrac{c_2\pi^2}{c_3}$.
\end{proof}

\section{Normal form reduction close to the bifurcation point} \label{sec:bif}
In this section, we study solutions of \eqref{EL1a}--\eqref{P1a} close to the trivial steady state \eqref{trivial} for values of the bifurcation parameter $\beta$ close to $\beta_0$\,.
Additional to the translation and rotation symmetries mentioned above, the problem is also invariant under the reflection $\alpha\to-\alpha$, $s\to-s$, $\bfu\to -\bfu$, 
$\varphi\to\varphi+\pi$. For this reason we expect a pitchfork bifurcation. The perturbation scaling given below is guided by this expectation. 

Closeness to the trivial steady state and to the bifurcation point will be measured in terms of the small positive parameter
\begin{align*}
\varepsilon := \sqrt{\abs{\beta_0^2 - \beta^2}} \,, \qquad \text{whence}\quad \beta^2 = \beta_0^2 - \sigma \varepsilon^2\,, \quad \text{with} \quad  \sigma := \sign(\beta_0^2 - \beta^2)\,.
\end{align*}
For the solution we make the ansatz
\begin{align} 
\bfu &= \overline{\bfu} + \varepsilon \bfu_1 + \varepsilon^2 \bfu_2 + \varepsilon^3 \bfu_3 + \mathcal{O}(\varepsilon^4)\ , \quad \bfu_j = (a_j, b_j) \,, \nonumber\nl
\varphi &= \overline\varphi + \varepsilon \varphi_1 + \varepsilon^2 \varphi_2 + \varepsilon^3 \varphi_3 + \mathcal{O}(\varepsilon^4) \,.\label{eq:bif1b}
\end{align}
Another preparatory step is the time rescaling $t \mapsto t/\varepsilon^{2}$, whence system \eqref{EL1a}--\eqref{BCa} becomes
\begin{align} 
&\varepsilon^2 \dt\bfu  = \da^2\left( P_0 \bfom(\varphi)^\perp + \gamma \frac{\bfu}{|\bfu|} \right) \,, &0<\alpha< 1 \,,  \nonumber\nl
&\varepsilon^2\beta^2 \dt\varphi = P_0 \bfu \cdot \bfom(\varphi) + \beta^2 \da P_1 \,, &0<\alpha< 1 \,, \label{eq:bif2}\nl
&(P_{0} - 1 + \gamma) \bfom(\varphi)^\perp + \gamma \frac{\bfu}{|\bfu|} = \da\varphi = 0, &\alpha = 0, 1 \,,  \nonumber
\end{align}
with $P_0$ and $P_1$ given by \eqref{P0a}, \eqref{P1a} (as functions of $(\bfu,\varphi,\da\varphi)$).
It is straightforward but computationally tedious to insert the power series ansatz and to re-expand all the terms in these equations. The details will be omitted.
Comparing coefficients of equal powers of $\varepsilon$ will provide equations for the coefficients in the expansion \eqref{eq:bif1b}\,:

\subsection*{$\mathcal{O}(\varepsilon)$-equations.} At $\mathcal{O}(\varepsilon)$ we obtain the steady state version of the linearized system \eqref{eq:bif1a}-\eqref{eq:bif1bdy}
for the unknowns $\bfu_1$ and $\varphi_1$. These equations have already been solved in the previous section, implying $a_1 = 0$ and $(b_1, \varphi_1)$ given by \eqref{eq:lin-X}:
\begin{align} \label{eq:lin} 
b_1(\alpha,t) = A(t)(\sin(2\pi \alpha) - 2\pi \gamma \alpha) +D(t)\,,\  \varphi_1(\alpha,t) = A(t)\gamma(\sin(2\pi \alpha) - 2\pi \alpha ) +D(t) \,.
\end{align}
The only goal of the following computations for the higher order terms is to determine the dynamics of $A(t)$ and $D(t)$.

\subsection*{$\mathcal{O}(\varepsilon^2)$-equations.} At any higher order $\varepsilon^j$ we obtain inhomogeneous versions of the steady state linearized system for $(\bfu_j,\varphi_j)$
with inhomogeneities depending only on coefficients of order up to $j-1$. The problem for $a_2$ reads
\begin{align} \label{eq:lin-Xhat1}
\begin{array}{l}
\partial_{\alpha}^{2} \left(a_2 + b_1\varphi_1 - \frac{1}{2} \gamma b_1^2 - \beta_0^2 (\da\varphi_1)^2 \right) = 0 \,, \nl
a_2 + b_1\varphi_1 - \frac{1}{2} \gamma b_1^2  = \frac{1}{2} (1-\gamma) \varphi_1^2 \,, \qquad \alpha = 0, 1 \,,
\end{array}
\end{align}
and for $(b_2,\varphi_2)$ we obtain the homogeneous problem (as for $(b_1,\varphi_1)$), using $a_1=0$. This is the reason, why the equations up to third order have to be considered. 
The solution $(b_2,\varphi_2)$ will not be needed in the following. Noting that all inhomogeneities in \eqref{eq:lin-Xhat1} are quadratic in the first order coefficients, the solution can be
computed explicitly and written as
\begin{align*}
a_2(\alpha,t) = A(t)^2 h_1(\alpha) + A(t) D(t) h_2(\alpha) - \frac{D(t)^2}{2}\,,
\end{align*}
with appropriate functions $h_1$ and $h_2$\,.

\subsection*{$\mathcal{O}(\varepsilon^3)$-equations.} Only the problem for $(b_3,\varphi_3)$ is needed:
\begin{align*} 
&\dt b_1 = \da^2 \left(\gamma b_3 - \varphi_3 - a_2 (\gamma b_1 - \varphi_1) + b_1\varphi_1^2 - \frac{1}{3} \varphi_1^3 
- \frac{\gamma}{2} b_1^3 - \beta_0^2 \varphi_1 (\da \varphi_1)^2  \right) \,, \nl
&\beta_0^2\dt \varphi_1 = \beta_0^2 \da^2 \varphi_3 + b_3 - \varphi_3 - \sigma \da^2 \varphi_1 - a_2 b_1 
+ b_1 \varphi_1^2 - b_1^2 \varphi_1 - \frac{1}{3} \varphi_1^3 \nl
&\qquad \qquad + \beta_0^2 (b_1 - \varphi_1) (\da\varphi_1)^2 + \beta_0^2\da\left( \left( \varphi_1^2 -2a_2 - 2b_1 \varphi_1
+ \frac{9}{5}\beta_0^2 (\da\varphi_1)^2 \right) \da\varphi_1 \right) \,,  \nl
&\gamma (b_3 - \varphi_3) - a_2 (\gamma b_1 - \varphi_1) + b_1\varphi_1^2 - \frac{1}{3} \varphi_1^3 - \frac{\gamma}{2} b_1^3 
- \beta_0^2 \varphi_1 (\da\varphi_1)^2 = \frac{1}{6}(1-\gamma) \varphi_1^3 \,, \qquad \alpha = 0, 1 \,, \nl
&\da\varphi_3 = 0 \,,  \qquad \alpha = 0, 1 \,.
\end{align*}
Using the formulas for $b_1,\varphi_1,a_2$, the differential equations can be written in compact form as
\begin{align} \label{3rd} 
\dot A \begin{pmatrix} \tilde b \\\beta_0^2\tilde\varphi\end{pmatrix} 
+ \dot D \begin{pmatrix} 1 \\ \beta_0^2\end{pmatrix} 
&= \mathcal{L}_0(b_3,\varphi_3) - \sigma A \begin{pmatrix} 0 \\ \tilde\varphi'' \end{pmatrix}
+ A^3 \begin{pmatrix} h_3 \\ h_4\end{pmatrix} \notag\\
&+ A^2 D \begin{pmatrix} h_5 \\ h_6\end{pmatrix} + A D^2 \begin{pmatrix} h_7 \\ h_8\end{pmatrix} + D^3 \begin{pmatrix} 0 \\ 1/6 \end{pmatrix}\,, 
\end{align}
where $\dot{}$ denotes time derivative, with appropriate functions $\tilde b \,, \tilde\varphi$\,, and $h_j$ for $j=3, 4, \ldots, 8$, and 
\begin{align} \label{eq:lin-l1}
\mathcal{L}_0 (b, \varphi) :=
\begin{pmatrix}
\da^2 (\gamma b - \varphi) \nl \beta_0^2 \da^2 \varphi + b - \varphi
\end{pmatrix} \,,
\end{align}
essentially the operator $\mathcal{L}$ from the previous section with $\beta=\beta_0$. An equation for $A$ and $D$ will be derived as a solvability condition for 
\eqref{3rd}. With the domain 
$$
\mathcal{D}(\mathcal{L}_0) := \left\{(b, \varphi): b - \varphi = \da \varphi = 0 \text{ for } \alpha = 0, 1 \right\}  \,,
$$ 
the adjoint with respect to $L^2(0,1)^2$ reads
\begin{align*}
\mathcal{L}_0^*(u,v) := 
\begin{pmatrix}
\gamma \da^2 u + v \nl  \da^2(\beta_0^2 v - u) - v
\end{pmatrix} \,,
\end{align*}
with domain $\mathcal{D}(\mathcal{L}_0^*) := \left\{(u,v):\,u = \da(4\pi^2\gamma u - v) = 0 \text{ for } \alpha = 0, 1\right\}$. 
Since we are at the bifurcation point $\beta=\beta_0$, the nullspaces of $\mathcal{L}_0$ and $\mathcal{L}_0^*$ are two-dimensional.
In particular, $(u,v)\in \mathcal{N}(\mathcal{L}_0^*)$ if and only if there exist $A^*,D^*\in\R$ such that
\begin{align*}
\begin{pmatrix} u\\ v \end{pmatrix} = A^*\sin(2\pi \alpha) \begin{pmatrix} 1 \\ 4\pi^{2} \gamma \end{pmatrix} 
+ D^*  \begin{pmatrix} \cos(2\pi \alpha) - 1 \\ 4\pi^2 \gamma \cos(2\pi\alpha)\end{pmatrix} \,.
\end{align*}
An ODE system for $A$ and $D$ is now derived by taking the scalar product of \eqref{3rd} with $(u,v)$:
\begin{align} 
\dot A \int_0^1 (\tilde b u + \beta_0^2 \tilde\varphi v)d\alpha + \dot D \int_0^1 (u+\beta_0^2 v)d\alpha = -\sigma A \int_0^1 \tilde\varphi'' v d\alpha + A^3 \int_0^1 (h_3 u + h_4 v) d\alpha \notag \\
+\ A^2 D \int_0^1 (h_5 u + h_6 v) d\alpha + AD^2 \int_0^1 (h_7 u + h_8 v) d\alpha \,. \label{eq:sp}
\end{align}

It is left to expand and evaluate the integrals in \eqref{eq:sp}. Starting with its left-hand side, we compute
\begin{align} \label{eq:ad1}
\dot A \int_0^1 (\tilde b u + \beta_0^2 \tilde\varphi v)d\alpha + \dot D \int_0^1 (u+\beta_0^2 v)d\alpha = \left( \frac{1}{2}\left(1 + 5\gamma - 3 \gamma^2 \right) A^* + \pi \gamma D^* \right) \dot{A} - D^* \dot{D} \,.
\end{align}
On the right-hand side of \eqref{eq:sp}, the first integral becomes
\begin{align} \label{eq:ad2}
-\sigma A \int_{0}^{1} \tilde{\varphi}'' v d\alpha = 8 \pi^4 \gamma^2 \sigma A^* A \,.
\end{align}
In the following, we summarize the results of straightforward yet lengthy computations for the remaining integrals of \eqref{eq:sp}. Parts of the derivation have been carried out manually and cross-checked with \texttt{Mathematica} for consistency. After integration by parts, we obtain boundary terms, which reduce to 
\begin{align}
-\frac{(1-\gamma)}{6} (\da u) \varphi_1^3 
\Big|_0^1 
= \frac{2\pi^2 \gamma}{3} (1-\gamma) (4\pi^2 \gamma^2 A^2 - 6\pi \gamma AD + 3D^2) A^* \,,
\label{eq:ad2a}
\end{align}
where we applied the boundary conditions for the $\mathcal{O}(\varepsilon^3)$-equations, and that $(u, v) \in \mathcal{D}(\mathcal{L}_{0}^*)$\,. The remaining terms are further simplified using the adjoint equations,
\begin{align*}
\int_0^1 \Bigg[&\left(a_2 \varphi_1 + b_1 \varphi_1^2 - \frac{1}{3} \varphi_1^3 - \frac{\gamma}{2} b_1^3 - \beta_0^2 \varphi_1 (\da \varphi_1)^2 \right) \da^2 u
+ \Bigg(b_1 \varphi_1^2 - b_1^2 \varphi_1 - \frac{1}{3} \varphi_1^3 \notag \nl
&+\ \beta_0^2 (b_1 - \varphi_1) (\da \varphi_1)^2 +  \beta_0^2 \da \left( \varphi_1^2 - 2a_2 -2b_1 \varphi_1 + \frac{9}{5} \beta_0^2 (\da \varphi_1)^2 \right) (\da \varphi_1) \Bigg) v \Bigg] \diff \alpha \,, 
\end{align*}
which we compute as
\begin{align}
\frac{\pi^2 \gamma}{12} (1-\gamma)\left( (9+36\gamma - 6 (11+4\pi^2)\gamma^2 - 4(21+4\pi^2)\gamma^3 ) A^2 + 48 \pi \gamma AD - 24 D^2 \right) A^* A \,.
\label{eq:ad5}
\end{align}

Taking the sum of \eqref{eq:ad2}, \eqref{eq:ad2a}, and \eqref{eq:ad5}, the terms involving $D$ drop out. 
Equating this sum with \eqref{eq:ad1}, we get an ODE for our unknowns $A$ and $D$,
\begin{align*}
&\left( \frac{1}{2}\left(1 + 5\gamma - 3 \gamma^2 \right) A^* + \pi \gamma D^* \right) \dot{A} - D^* \dot{D} \nl
&= \frac{\pi^2 \gamma}{12} \left( (1-\gamma)(1-2\gamma) (9+ 54\gamma +(42+8\pi^2)\gamma^2) A^2 + 96 \pi^2 \gamma \sigma\right) A^* A \,. 
\end{align*}
Finally, we compare the coefficients of $A^*$ and $D^*$ above to write equations for $A$ and $D$, respectively. In particular,
\begin{align} \label{eq:ad7}
\dot{A} = A (\kappa_1(\gamma) \sigma - \kappa_2(\gamma) A^2) \,,
\end{align}
where
\begin{align*}
\kappa_1(\gamma) := \frac{16 \pi^4 \gamma^2}{1+5\gamma - 3\gamma^2} \,, \quad
\kappa_2(\gamma) := \frac{-\pi^2 \gamma (9+ 54\gamma +(42 + 8 \pi^2) \gamma^2)}{6(1+5\gamma-3\gamma^2)}\ (1-\gamma)(1-2\gamma) \,.
\end{align*}
Observe that $\kappa_1(\gamma)> 0$ for $0 < \gamma < 1$. Recalling that $\gamma = (\mu^T / (f_\text{ref} + \mu^T)) \in (0,1)$ measures the relative strength of tension against boundary forces, the sign of $\kappa_2$ depends on the value of $\gamma$ as follows:
\begin{align*}
\begin{cases}
\kappa_2 < 0 &\text{if } 0 < \gamma < \tfrac{1}{2} \quad (\mu^T < f_\text{ref}) \,, \nl
\kappa_2 = 0 &\text{if } \gamma = \tfrac{1}{2} \qquad \quad (\mu^T = f_\text{ref}) \,, \nl
\kappa_2 > 0 &\text{if } \tfrac{1}{2} < \gamma < 1 \quad (\mu^T > f_\text{ref}) \,. \nl
\end{cases}
\end{align*}
For clarity in the following discussion, we refer to the regimes $0 < \gamma < \tfrac{1}{2}$ and $\tfrac{1}{2} < \gamma < 1$ as the \textbf{low-tension} and \textbf{high-tension} cases, respectively. On the other hand,
\begin{align*} 
\dot{D} = \pi \gamma \dot{A} = \pi\gamma A (\kappa_1(\gamma) \sigma - \kappa_2(\gamma) A^2) \,,
\end{align*}
indicating that $D$ does not evolve independently but simply follows the dynamics of $A$, consistent with its interpretation as the trivial steady-state contribution.

\subsection*{Bifurcation regimes} Depending on the value of $0<\gamma<1$, two regimes arise. A summary of these results can be found in Table \ref{table:bif}.\\

\paragraph{\bf Case 1. $\frac{1}{2}<\gamma<1$ (high tension).} Equation~\eqref{eq:ad7} is the normal form of the supercritical pitchfork bifurcation. For $\beta > \beta_0$\,, the trivial steady state is stable, whereas for $\beta < \beta_0$\,, stability is transferred to the bifurcating steady states
\begin{eqnarray}
\bfu &=& \begin{pmatrix} 1\\ 0 \end{pmatrix} \pm \varepsilon \left( \sqrt{\frac{\kappa_{1}}{\kappa_{2}}} \begin{pmatrix} 0\\ \sin(2\pi\alpha) - 2\pi \gamma \alpha \end{pmatrix} + C \right) , \label{eq:ad8a} \nl
\varphi &=& \frac{\pi}{2} \pm \varepsilon \left( 
\sqrt{\frac{\kappa_{1}}{\kappa_{2}}}\ \gamma (\sin(2\pi\alpha) - 2\pi \alpha) + C \right), \label{eq:ad8b} 
\end{eqnarray}
for some constant $C$ depending on the initial data.\\

\paragraph{\bf Case 2. $0<\gamma<\frac{1}{2}$ (low tension).} Equation \eqref{eq:ad7} is the normal form of the subcritical pitchfork bifurcation. In this case, for $\beta > \beta_0$\,, the trivial steady state is stable and there are two unstable equilibria with $A = \pm\sqrt{-\kappa_1/\kappa_2}$\,. For 
$\beta < \beta_0$\,, the trivial steady state is unstable and the bifurcation analysis does not provide a candidate for long time behavior. This will be investigated numerically in the following section.\\

\paragraph{\bf Remark.} For the degenerate case $\gamma=\frac{1}{2}$ (equal tension and boundary forces), higher-order nonlinear terms would be needed to completely determine the qualitative behavior. Since the super- and sub-critical regimes provide sufficient insight into the dynamics of our system, we no longer compute the said terms.\\

\begin{table}[ht] \centering 
\caption{Behavior of steady states depending on the dimensionless parameters $\beta$ and $\gamma$}

\renewcommand{\arraystretch}{1.5}
\begin{tabular}{|c | c | c |} \hline
\ & $\frac{1}{2}< \gamma<1$ (supercritical) &$0<\gamma<\frac{1}{2}$ (subcritical) 	
\\ \hline
\begin{tabular}{@{}c@{}}
$\beta>\beta_0$ 
\end{tabular}
& $A=0$ is stable 
&\begin{tabular}{@{}l@{}}
$A=0$ is stable\\
$A = \pm \sqrt{-\kappa_1/ \kappa_2}$ is unstable
\end{tabular} 
\\ \hline
\begin{tabular}{@{}c@{}}
$\beta<\beta_0$ 
\end{tabular}
&\begin{tabular}{@{}l@{}} 
$A=0$ is unstable\\ $A = \pm \sqrt{\kappa_1/\kappa_2}$ is stable
\end{tabular}
&$A=0$ is unstable 
\\ \hline
\end{tabular}
\label{table:bif}
\end{table}

\section{Numerical Simulations\label{sec:num}}
To solve \eqref{EL1}--\eqref{P1} numerically, we employ the method of lines. This means that we first discretize in space to generate a system of ODEs and then solve the corresponding initial value problem in time using a standard ODE integrator. Here, space was discretized into $n = 40$ nodes to produce 120 ODEs, explained further below. Those ODEs were integrated using \texttt{ode15s}, an adaptive time integrator of \texttt{Matlab}. Non-default values used include an absolute and relative tolerance of $10^{-6}$.

For convenience, let us rewrite \eqref{EL1}-\eqref{P1} in terms of fluxes,
\begin{align} \label{eq:flux1}
\begin{cases}
\dt \bfz = \da \vec{f}(\bfz, \varphi) \,, &\vec{f}(\bfz, \varphi)\Big|_{\alpha = 0, 1} = (1-\gamma) \bfom (\varphi)^{\perp} \Big|_{\alpha = 0, 1} \,, \nl
\dt\varphi = \da g(\bfz, \varphi) + s(\bfz, \varphi) \,,  &g(\bfz, \varphi)\Big|_{\alpha = 0, 1}  = 0 \,, 
\end{cases}
\end{align}
where 
\begin{align} \label{eq:flux2}
\vec{f}(\bfz, \varphi) := P_0 \,\bfom(\varphi)^\perp + \gamma\ \frac{\da\bfz}{|\da\bfz|}\,, \quad 
\quad g(\bfz, \varphi) = P_1\,, \quad \text{and} \quad
s(\bfz, \varphi) := \frac{P_0 \,\da\bfz \cdot \bfom(\varphi)}{\beta^2} \,,
\end{align}
with $P_0$ and $P_1$ defined in \eqref{P0} and \eqref{P1}, respectively. Note that $g(\bfz, \varphi)|_{\alpha = 0, 1}  = 0$, for normal cases, is equivalent to $\da \varphi|_{\alpha=0,1}  = 0$. 

We take advantage of the structure of \eqref{eq:flux1} by replacing the $\alpha$-derivatives with finite volume approximations. To this end, the spatial domain $[0, 1]$ is subdivided into cells of size $\Delta \alpha = 1/n$ for $n\in \mathbb{N}$. The corresponding $(n+1)$ cell edges are then given by 
\[
\alpha_i := i/n\,, \quad \text{for } i = 0, \ldots, n\,,
\]
while the $n$ cell centers are 
\[ \alpha_{i+ 1/2} := (\alpha_{i}+\alpha_{i+1})/2\, \quad \text{for } i=0, \ldots, n-1\,. \]
In this strategy, we track the approximate variable values at the cell centers, and not at the cell edges:
\[ \bfz_{i+1/2} := \frac{1}{\Delta \alpha} \int_{\alpha_i}^{\alpha_{i+1}} \bfz(\alpha) d\alpha \approx \bfz(\alpha_{i+1/2})\,, \]
and 
\[ \varphi_{i+1/2} := (1/\Delta\alpha) \int_{\alpha_i}^{\alpha_{i+1}} \varphi(\alpha) d\alpha \approx \varphi(\alpha_{i+1/2})\,, \]
for $i=0,1, \ldots, n\,$. 

Away from the cell edges, we discretize \eqref{eq:flux1}--\eqref{eq:flux2} as follows:
\begin{eqnarray}
\dt \bfz_{i+1/2} &=& (\vec{f}_{i+1} - \vec{f}_i)/\Delta \alpha \,, \label{eq:flux3a}
\\
\dt \varphi_{i+1/2} &=& (g_{i+1}-g_i)/\Delta \alpha + s_{i+1/2} \,, \label{eq:flux3b}
\end{eqnarray}
for $i = 1, \ldots, n-2\,$. Standard averaging and centered differences are used for the unknowns 
\begin{eqnarray*}
\varphi_{i} &=& \left(\varphi_{i+1 / 2}+\varphi_{i-1 / 2}\right) / 2 \,, \nl
\partial_{\alpha} \varphi_{i} &=& \left(\varphi_{i+1 / 2}-\varphi_{i-1 / 2}\right) / \Delta \alpha \,, \nl
\partial_{\alpha} \mathbf{z}_{i} &=& \left(\mathbf{z}_{i+1 / 2}-\mathbf{z}_{i-1 / 2}\right) / \Delta \alpha\,,
\end{eqnarray*}
inside the fluxes $\vec{f}_i := \vec{f}(\bfz_i, \varphi_i)\,$ and $g_i := g(\bfz_i, \varphi_i)\,$, as well as the discrete pressure terms $P_{0, i} := P_0(\bfz_i, \varphi_i)\,$ and $P_{1, i}:= P_1(\bfz_i, \varphi_i)\,$. For the source term in \eqref{eq:flux3b}, we set
\begin{align} \label{eq:flux4}
s_{i+1/2} := (s_i + s_{i+1})/2\,, \qquad \text{where } s_i := s(\bfz_i, \varphi_i)\,,
\end{align}
for reasons stated below.

At the cell edges, the fluxes are naturally defined as
\begin{align*}
\vec{f}_{0}=(1-\gamma) \bfom \left(\varphi_{0}\right)^{\perp}\,, \quad \vec{f}_{n}=(1-\gamma) \bfom \left(\varphi_{n}\right)^{\perp}\,, 
\qquad g_{0}=0\,, \quad g_{n}=0\,.
\end{align*}
To estimate $\varphi_{0}\,$, we introduce a quadratic interpolant $q = q(\alpha)$ satisfying
\begin{align*}
\partial_{\alpha} q(\alpha=0)=\partial_{\alpha} \varphi_{0}=0\,, \quad
q(\alpha=\Delta \alpha / 2)=\varphi_{1 / 2}\,, \quad 
q(\alpha= 3 \Delta \alpha / 2)=\varphi_{3 / 2}\,,
\end{align*}
and similarly for $\varphi_n\,$. These lead to the approximations
\begin{align*}
\varphi_{0}=q(\alpha=0) = \frac{9}{8}\ \varphi_{1 / 2} - \frac{1}{8}\ \varphi_{3 / 2} \,, 
\quad \text{and} \quad
\varphi_{n}=q(\alpha=1)= \frac{9}{8}\ \varphi_{n-1 / 2} -  \frac{1}{8}\ \varphi_{n-3 / 2}\,.
\end{align*}

It is left to estimate the source terms $s_{1/2}$ and $s_{n-1/2}$ in the leftmost and rightmost cells in the domain, respectively. Because methods that utilize exact boundary condition information tend to do better in general, we incorporate such information by noting that, at the left,
\[
P_{0,0}\ \bfom\left(\varphi_{0}\right)^{\perp}+\gamma \frac{\partial_{\alpha} \mathbf{z}_{0}}{\left|\partial_{\alpha} \mathbf{z}_{0}\right|}=(1-\gamma) \bfom\left(\varphi_{0}\right)^{\perp} \,,
\]
scalar multiplication with $\bfom\left(\varphi_{0}\right)$ yields $\partial_{\alpha} \mathbf{z}_{0} \cdot \boldsymbol{\omega}\left(\varphi_{0}\right)=0$\,,
so that 
\[
s_0 = \frac{1}{\beta^2}\ P_{0,0}\ \partial_{\alpha} \mathbf{z}_0 \cdot \boldsymbol{\omega}\left(\varphi_{0}\right) = 0\,.
\]
Similarly, $s_n=0$ and our choice in \eqref{eq:flux4} allows us to incorporate this boundary information consistently into our method while simultaneously providing estimates for $s_{1/2}$ and $s_{n-1/2}$\,.

\begin{example}[Small perturbation from the trivial steady state] \label{ex:small}
Consider the trivial steady state
\begin{align*} 
\bar{\bfz}(\alpha) = \begin{pmatrix}
\bar{\bfz}_1(\alpha) \\ \bar{\bfz}_2(\alpha)
\end{pmatrix}= \begin{pmatrix}
\alpha \\ 0
\end{pmatrix} \quad \text{and} \quad \bar{\varphi} = \frac{\pi}{2} \,, \qquad \alpha \in [0,1]\,.
\end{align*} 
For an initial condition, let us introduce a small perturbation of $(\bar{\bfz}, \bar{\varphi})$, namely,
\begin{align} \label{eq:myic1}
\bfz_I(\alpha) = \bar{\bfz}(\alpha) + 0.1 h(\alpha) = \begin{pmatrix}
\alpha + 0.1 h(\alpha) \\ 0.1 h(\alpha)
\end{pmatrix} \quad \text{and} \quad \varphi_I(\alpha) = \frac{\pi}{2} + 0.01 h(\alpha) \,,
\end{align}
for $\alpha\in[0,1]$, where $h(\alpha)$ is infinitely smooth with $h(0)=h(1)=0$ and $h'(0)=h'(1)=0$, ensuring compatibility with the boundary conditions and avoiding artificial boundary effects, and $\displaystyle \max_{\alpha \in [0,1]} h(\alpha) = 1$. A convenient choice is
\begin{align*}
h(\alpha) = \left\{\begin{array}{cc} \exp\left(\frac{-1}{(\alpha-1/7)(6/7-\alpha)}+\frac{196}{25}\right) & \alpha \in (1/7,6/7)\,, \\
0 & \rm{otherwise.} \end{array} \right.
\end{align*}
In the following, we evolve the perturbed initial condition \eqref{eq:myic1} for parameter regimes chosen to reflect the stability predictions summarized in Table \ref{table:bif}. Specifically, we vary $\beta$ across the bifurcation value $\beta_0$ and consider both high- and low-tension regimes (corresponding to $\gamma = 3/4$ and $\gamma=1/4$, respectively). Each subexample illustrates the long-time behavior of the filament strip and its relation to the stability or instability of the trivial and nontrivial steady states.
\end{example}

\begin{subexample}[$\beta> \beta_0$ -- thick band] 
As predicted by Table \ref{table:bif}, the trivial steady state is stable for $\beta > \beta_0$\,. Let $\gamma = 3/4$ and $\beta =1.01\beta_0$\,, so that $\beta_{0} = 0.0919$ and $\beta = 0.1011$. We plotted the evolution of the filament strip in Figure \ref{fig:0101} for times $t=0.001,0.01,0.1,1,10,100$. There is some initial equilibration as local perturbations give way to a more global curved shape. However, as time increases, the filaments become parallel. The entire strip moves up and very little to the right until it settles to a trivial equilibrium and the strip stops moving.

When $\gamma = 1/4$ and $\beta =1.01\beta_0$\,, the trivial steady state is again stable.
The initial dynamics are similar to those shown in Figure \ref{fig:0101}, with the strip first becoming crescent-shaped before relaxing to the trivial equilibrium. Quantitatively, relaxation occurs faster in this low-tension case: the strip reaches 99\% of its steady-state vertical displacement in approximately 54 time units, compared to approximately 77 time units for $\gamma = 3/4$. The filament strip also travels further north before stopping completely. We omit the corresponding plots since the behavior is qualitatively similar to that shown in Figure \ref{fig:0101}.

\begin{figure}[ht]
\centering
\includegraphics{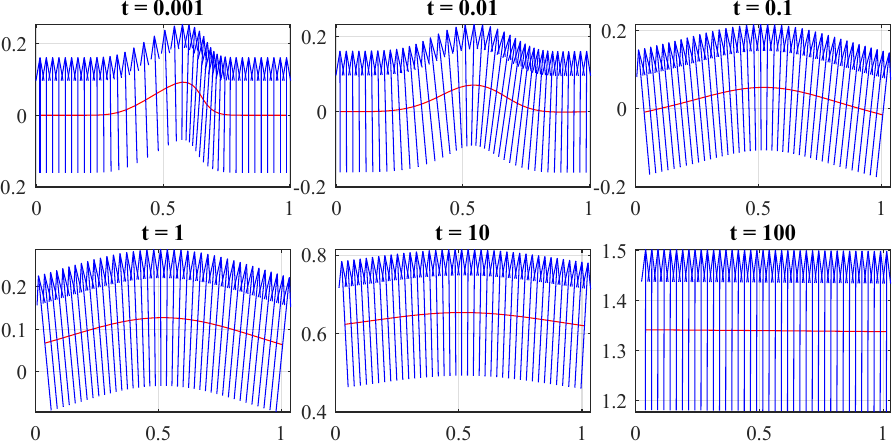}
\caption{Evolution of filament strip in Example 1.1 with $\gamma = 3/4$ and $\beta=1.01\beta_0$ (high tension, thick band). The trivial steady state is stable.} 
\label{fig:0101}
\end{figure}
\end{subexample}

\begin{subexample}[$\gamma = 3/4\,,$ $\beta =0.99\beta_0$ 
-- high tension, thin band]
The first few seconds of strip evolution differ very little from the previous example.  However, at later times, as shown in Figure \ref{fig:0102}, the crescent shape persists  with a relatively small constant rightward velocity compared to the much faster constant upward velocity of the whole structure. This corresponds to the bifurcating steady state \eqref{eq:ad8a}-\eqref{eq:ad8b}, predicted by the linear stability and bifurcation analysis, which we expect to be stable for the current choice of parameters. 

\begin{figure}[ht]
\centering
\includegraphics{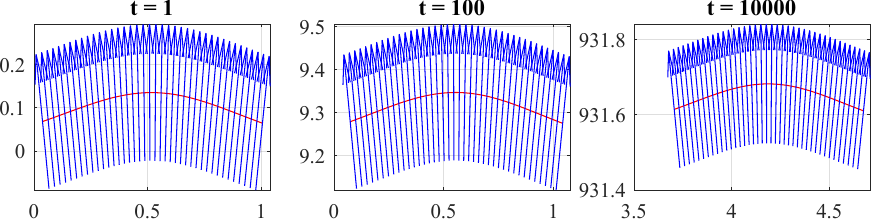}
\caption{Evolution of filament strip in Example 1.2 with $\gamma = 3/4$ and $\beta=0.99\beta_0$ (high tension, thin band). Stability is transferred to the bifurcating steady state \eqref{eq:ad8a}-\eqref{eq:ad8b}.}
\label{fig:0102}
\end{figure}
\end{subexample}

\begin{subexample}[$\gamma = 1/4\,,$ $\beta =0.99\beta_0$ -- low tension, thin band] \label{ex:small4}
Linear theory only predicts that the trivial steady state is unstable. However, as shown in Figure \ref{fig:0104}, the numerical simulations reveal that the system evolves toward a fan-shaped filament strip (which may be viewed as an extreme version of the crescent shape observed in Figure \ref{fig:0102}) that travels predominantly northward and slightly westward at an approximately constant velocity.

The persistence of this traveling configuration suggests the existence of a bifurcating steady state beyond the loss of stability of the trivial equilibrium. Moreover, since this behavior is observed for parameter values near and beyond the bifurcation point, the numerical results indicate the possible presence of a bistable region, even for values $\beta > \beta_0$. We explore this phenomenon further in Example \ref{ex:bi}.

\begin{figure}[ht]
\centering
\includegraphics{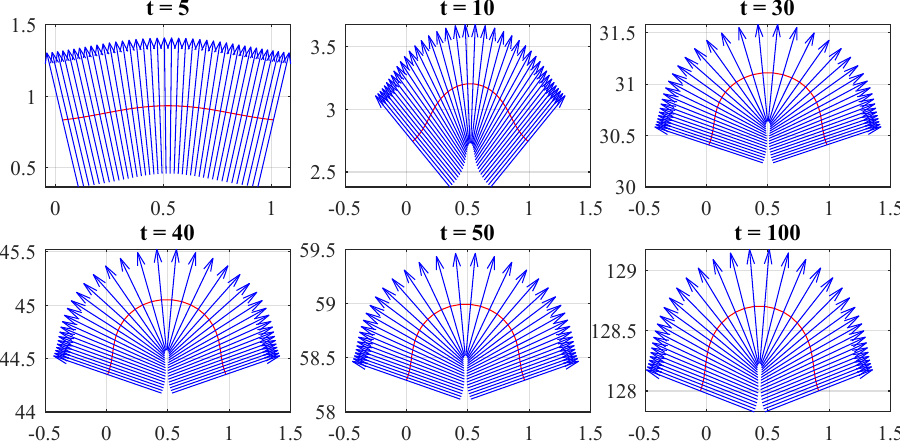}
\caption{Evolution of filament strip in Example 1.3 with $\gamma = 1/4$ and $\beta=0.99\beta_0$ (low tension, thin band).}
\label{fig:0104}
\end{figure}
\end{subexample}

In the examples  above, long-time behavior consists of configurations that persist in shape while translating through space. To prepare for the subsequent discussion, we clarify our use of the term \textit{steady state} in the context of our numerical simulations. Many of the long-time solutions observed are not stationary in the laboratory frame but instead correspond to traveling wave type solutions, in which the filament strip maintains a fixed shape while translating at an approximately constant velocity. In a co-moving reference frame, such solutions correspond to steady states of the system. We therefore refer to both stationary configurations and traveling configurations with constant shape and velocity as \textit{steady states}, unless stated otherwise.

\begin{example}[Subcritical bifurcation -- bistable region for low tension, thick band] \label{ex:bi}
Building on the traveling-wave behavior observed in Example \ref{ex:small4}, we now explore a parameter regime where bistability arises. We set $\gamma=1/4$ and $\beta =1.01\beta_0$\,. According to Table \ref{table:bif}, the trivial steady state is expected to be stable. However, if the initial condition is sufficiently far from the trivial equilibrium, the system can instead evolve toward a bifurcating steady state.

To illustrate this, we compare the evolution of filament strips starting from two comparable initial conditions:
\begin{align} \label{eq:myic2a}
\bfz_I(\alpha) = \begin{pmatrix}
\alpha \\ -\cos(1.134\pi \alpha)
\end{pmatrix} \quad \text{and} \quad \varphi_I(\alpha) = \frac{\pi}{2} \,,
\end{align}
and
\begin{align} \label{eq:myic2b}
\bfz_I(\alpha) = \begin{pmatrix}
\alpha \\ -\cos(1.135\pi \alpha)
\end{pmatrix} \quad \text{and} \quad \varphi_I(\alpha) = \frac{\pi}{2} \,.
\end{align}
In both cases, only the vertical displacement of the filaments is perturbed. The results are plotted side-by-side in Figure \ref{fig:0201}, with \eqref{eq:myic2a} as initial condition on the left and \eqref{eq:myic2b} on the right. 

By $t=10$, both strips have rotated to point southwest. Beyond $t=30$, the left strip relaxes toward a trivial steady state: the filaments become more parallel, the center-of-mass curve straightens, and the motion slows to a stop. In contrast, the right strip evolves into the fan-shaped traveling configuration observed in Example \ref{ex:small4}, eventually translating southwest at an approximately constant velocity. 

This demonstrates the bistable nature of the system for $\beta \gtrsim \beta_0$\, (and $0< \gamma < \frac{1}{2}$). The long-time outcome depends sensitively on the initial condition, with nearby starting positions leading either to the trivial steady state or to a traveling, bifurcating steady state reminiscent of the fan-shaped filament strip.

\begin{figure}[ht] 
\includegraphics{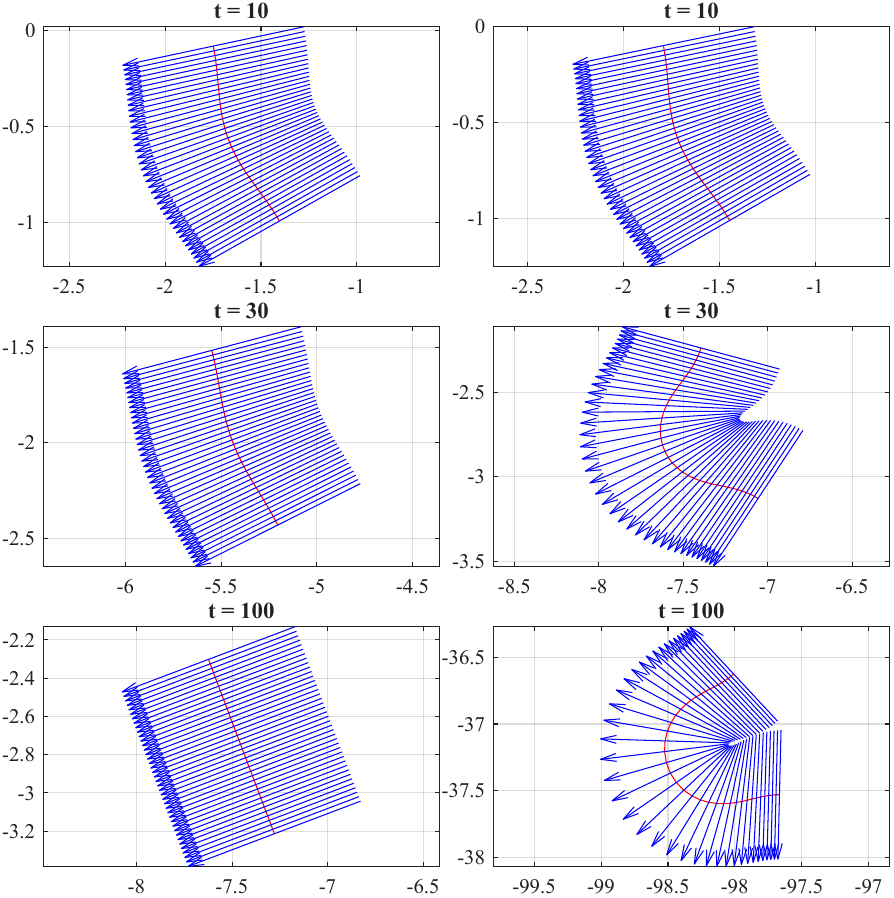}
\caption{Subcritical bifurcation showing bistability for $\gamma=1/4$ and $\beta=1.01\beta_0$. Left: Relaxation to the trivial steady state from \eqref{eq:myic2a}. Right: Evolution toward a bifurcating steady state from \eqref{eq:myic2b}.}
\label{fig:0201}
\end{figure}
\end{example}

\subsection*{Long-time behavior of solutions across $\beta$}
Following the simulations above, we examine how the long-time dynamics of the filament ensemble vary with $\beta$. We plot final velocities to identify trends in stable trivial states, bifurcating steady states, and more complex dynamics in both high- and low-tension regimes.

When solution branches were discovered in the bifurcation space, we used continuation and adaptive stepping with respect to $\beta$ to find additional solutions. This strategy found additional steady state solutions by solving (using Matlab's fsolve \cite{matlab}) the nonlinear system of equations resulting from setting the time derivatives in \eqref{eq:flux3a} equal to a constant velocity, in \eqref{eq:flux3b} equal to zero, and constraining $\frac{1}{n}\sum_{i=1}^n\bfz_i = 0$. While this strategy allows consideration of both stable and unstable steady states without requiring time integration, it can only be used for steady states that exhibit rigid body translation without rotation. When other long-term behaviors were encountered (e.g. spinning), we uniformly sampled those values of $\beta$ and, for each $\beta$\,, integrated over a relatively long time period, $[0,1000]$, to obtain more information about typical long-term behavior.

Many solutions are very close to steady states, either stationary or traveling. For traveling configurations, the shapes resemble those in the last panels of Figures \ref{fig:0102} and \ref{fig:0104}. For $\beta$ values farther from $\beta_0$\,, intermediate shapes appear roughly halfway between these extremes.

Figure \ref{fig:bif1a} shows the high tension case ($\gamma=3/4$) with bifurcation point $\beta_{0} = 0.0919$. The trivial equilibrium is stable for $\beta > \beta_0$ and loses its stability for values $\beta < \beta_0$\,, consistent with Table \ref{table:bif}. For each $\beta$ we plot the final, near steady-state velocities $|\dot{\bf z}_i|$ of all 40 filaments; in this regime, all filaments share the same velocity, either zero or a constant nonzero value.
\vspace{-0.4in}

\begin{figure}[ht] \centering
\includegraphics[width=0.85\textwidth]{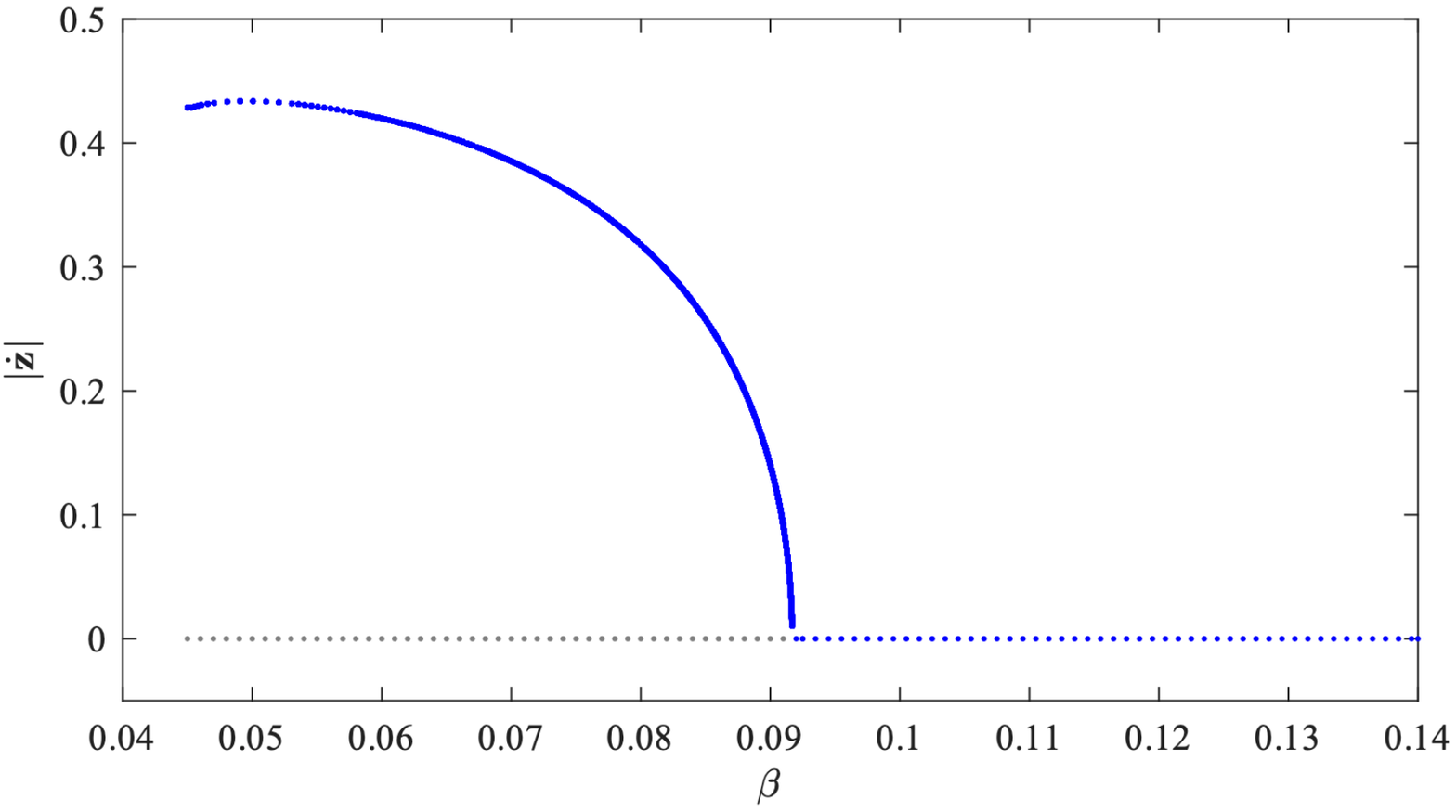}

\vspace{-0.4in}

\caption{Plot of the bifurcation behavior showing the speed of the filament ensemble vs. $\beta \in (0.045,0.14)$ at $t=10^6$ with $\gamma=3/4$ (high tension) and $\beta_0=0.0919$. Gray points correspond to unstable steady state solutions while blue correspond to stable steady state solutions.}
\label{fig:bif1a}
\end{figure}    

Figure \ref{fig:bif1b} shows the low-tension case ($\gamma=1/4$, $\beta_0 = 0.2757$) where the dynamics are more complex. As before, $|\bfz_i|$ are plotted for all $i$\,. While only a single point appearing for a given $\beta$ corresponds to translation at constant velocity with no rotation, multiple points appearing corresponds to more complex behaviors, detailed below. Blue points correspond to stable steady state solutions and gray points to unstable steady state solutions. Continuation was used to obtain solutions for $\beta \ge 0.205$ while uniform spacing was used elsewhere ($\beta = 0.135, 0.137, \ldots$). Simulations were run for each point in the uniformly sampled region using the initial conditions from Example \ref{ex:small} (near the trivial solution) on the left, and the initial condition from Example \ref{ex:bi} on the right. Red and orange correspond to maximal and minimal velocities, respectively, for each node $i$ over $t\in[900,1000]$. For $\beta > \beta_0$\,, the trivial equilibrium is stable. For $\beta < \beta_0$ solutions diverge away from the trivial steady state to a nontrivial steady state like the one shown in Example \ref{ex:small4} or to more complex behavior, seen in Figure \ref{fig:rep}. Taken together, these plots indicate the parameter ranges corresponding to stable trivial states, bifurcating steady states, and more complex dynamics.

\begin{figure}[ht] \centering
\includegraphics{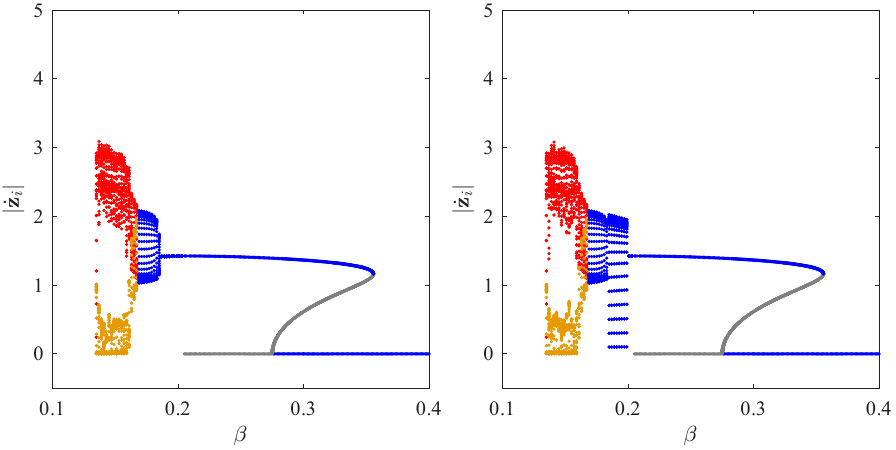}
\caption{Plot of the bifurcation behavior of solutions shown by plotting $|\dot{\bf z}_i|$ for stable steady states (blue), unstable steady states (gray), and long term behaviors for values of $\beta$ that seem to lack steady states (orange and red). Orange and red correspond to $\max_{t\in [900,1000]} |\dot{\bf z}_i|$ and $\min_{t\in [900,1000]} |\dot{\bf z}_i|$ for $i = 1,\ldots, n$\,. In the uniformly sampled region, the left plot uses the initial condition from Example \ref{ex:small} and the right plot uses the initial condition from Example \ref{ex:bi}. Here, $\beta \in (0.135,0.4)$ with $\gamma=1/4$ and $\beta_0 = 0.2757$. 
}
\label{fig:bif1b}
\end{figure} 

For $\beta_0 \lesssim 0.2$, maximal and minimal velocities frequently disagree, corresponding to three distinct behaviors reflected in Figure \ref{fig:rep}: \textbf{spinning motion} (filaments rotate about their center of mass), \textbf{whirling motion} (filaments rotate about a point far from the center of mass), and \textbf{chaotic motion} (neither periodic nor translational). These distinctions are important because they signal qualitative changes in the system: spinning and whirling reflect loss of simple translational invariance, while chaotic motion marks the onset of complex, potentially unpredictable filament interactions. In Figure \ref{fig:bif1b}, the right plot shows approximately $0.201 > \beta_\text{spinning} > 0.184 > \beta_\text{whirling} > 0.167 > \beta_\text{chaotic}$; on the left, only whirling and chaotic behaviors are observed using the initial condition from Example \ref{ex:small}. Comparing the two plots shows that long term behaviors depend on the initial condition used.

\begin{figure}[ht] \centering
\includegraphics[scale=0.85]{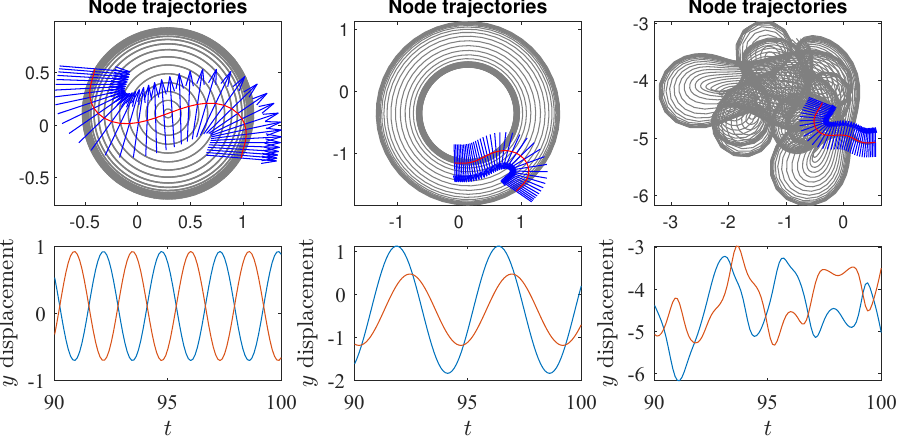}
\caption{Plot of typical nontrivial behaviors that can take place for low $\beta$ values, $\beta =$ 0.193 (left), 0.174 (middle), and 0.146 (right).  The top row shows trajectories of the centers of masses (gray) for each of the 40 discretized fibers from time 90 to 100.  It also includes fiber orientations (blue arrows) and centers of mass (red line) at time 90.  Motion for left and middle is counterclockwise while rightmost chaotic motion is sometimes clockwise and sometimes counterclockwise.  The bottom row shows the $y$ displacement for the nodes at the ends of the set of fibers.  Left and middle show decidedly periodic motion while the rightmost plot shows chaotic motion.
\label{fig:rep}}
\end{figure}  

\section{Discussion}
In this work, we have derived and analyzed a simplified continuum model for an ensemble of rigid filaments subject to friction, repulsive pressure, boundary confinement, and tension. By assuming a limit of large filament numbers, we obtained a coupled system of nonlinear partial differential equations for the filament positions and orientations.

Our central mathematical result is the identification of a stability threshold governed by the dimensionless parameter $\beta$\,. As shown in Section \ref{sec:lin}, the set of trivial steady states corresponding to a rectangular, non-moving filament ensemble is spectrally stable when $\beta > \beta_0$\,. Physically, this regime corresponds to a situation in which the stabilizing effects of tension and boundary confinement dominate over the repulsive pressure. However, as the parameter $\beta$ decreases, representing an increase in repulsive pressure or a decrease in tension, the system undergoes a symmetry-breaking instability.

Our formal bifurcation analysis and numerical simulations reveal that this instability is the normal form of a pitchfork bifurcation. However, owing to the rotational symmetry of the system, this is not a standard one-dimensional pitchfork, instead, the bifurcating manifold is two-dimensional.
The nature of this bifurcation, whether super- or sub-critical, depends on the tension parameter $\gamma$. The sub-critical case is of particular interest mathematically as it implies the existence of a hysteresis loop where the system may jump to a large-deformation branch even before the linear stability threshold is crossed. Beyond transitioning to steadily moving deformed states, our numerical investigations demonstrate that the system also exhibits more complex behaviors, including periodic oscillations and chaotic dynamics.
This suggests that in certain parameter regimes, the filament network is highly sensitive to perturbations, potentially allowing the system to switch between stable symmetric states, persistent traveling waves, and even chaotic regimes.

A key feature of bifurcated solutions is the persistence of motility. In the trivial steady state, the boundary forces on the outermost filaments point in opposite directions and cancel out, resulting in zero net force. However, in the bifurcated state, the rotational symmetry is broken. The filaments fan out, causing the boundary forces that remain perpendicular to the filament orientation to have a non-zero sum. This net force propels the entire ensemble, leading to traveling wave solutions. This provides a purely mechanical explanation for the generation of net propulsion, driven solely by the internal pressure of the filament network.

From a biological perspective, this simplified model offers insight into the role of Coulomb repulsion in the Filament Based Lamellipodium Model (FBLM) \cite{MOSS}. While often modeled merely as a volume-exclusion effect to prevent collapse, our results suggest that repulsion can act as an active drive for symmetry breaking and motility initiation. The \textit{pressure} forces the filaments to splay, and the geometric constraints translate this splaying into translational motion.

Although pressure drives this lateral splaying, our analysis shows that it also introduces a destabilizing effect in the longitudinal direction. To balance this, our model incorporates a tension force acting between the centers of mass of the filaments. It is important to note that this specific center-of-mass tension force is a mathematical abstraction and does not have a direct biological significance. However, it serves as a satisfactory substitute in this simplified setting. In the full FBLM and in actual biological cells, cell membrane tension and myosin contraction provide analogous stabilizing effects.

There are, of course, limitations to this caricature of the FBLM. We have treated filaments as rigid rods, ignoring the bending energy that is central to the full model \cite{MOSS, OelzCSSmall08}. The inclusion of filament flexibility would introduce a fourth-order derivative in the spatial variable, significantly complicating the analysis but likely providing a regularization of the large deformations seen in the sub-critical branch. Furthermore, the biological reality involves complex polymerization and depolymerization kinetics which we have neglected here to focus on the mechanical stability \cite{pollard2007regulation}. Future work should address the inclusion of bending elasticity to determine if it suppresses the sub-critical nature of the bifurcation. Additionally, extending this stability analysis to the full 2D setting of the FBLM, rather than the 1D cross-section considered here, remains a challenging but necessary step toward a complete understanding of actin network stability.

\section*{Statements and Declarations}

\subsection*{Conflict of Interest}
The authors declared that they have no conflict of interest.

\subsection*{Acknowledgments} GMA acknowledges support from the Ernst Mach Grant (ASEA-UNINET) via Austria's Agency for Education and Internationalisation (OeAD), funded by the Federal Ministry of Education, Science and Research (BMBWF), and from the Natural Sciences Research Institute (NSRI), University of the Philippines Diliman, project code MAT-24-1-01. JB is supported by NSF DMS (NIGMS) 2347956-7, NSF DMS-REU 2150108, and the IUPUI Research Support Funds Grant. CS is supported by the Austrian Science Fund (grant no. F65).

\bibliographystyle{siam}
\bibliography{pressure.bib}

\end{document}